\theoremstyle{plain} 
\newtheorem{theorem}{Theorem}[section]
\newtheorem{lemma}[theorem]{Lemma}
\newtheorem{corollary}[theorem]{Corollary}
\theoremstyle{definition}
\theoremstyle{remark}
\newcommand{\ol}{\overline}
\newcommand{\gen}[1]{\langle#1\rangle}
\newcommand{\vep}{\varepsilon}
\newcommand{\be}{\beta}
\begin{document}
\title{On classification of $(n+5)$-dimensional nilpotent $n$-Lie algebra of class two}

\author[Z. Hoseini]{Zahra Hoseini$^1$}
\author[F. Saeedi]{farshid saeedi$^1$}
\author[H. Darabi]{hamid darabi$^2$}

\date{}
\keywords{Nilpotent $n$-Lie algebra, Classification, Nilpotent $n$-Lie algebra of class two}	
\subjclass[2010]{Primary 17B05, 17B30; Secondary 17D99.}

\address{$^{1}$Department of Mathematics, Mashhad Branch, Islamic Azad University, Mashhad, Iran.}
\address{$^{2}$ Department of Mathematics, Esfarayen Branch, Islamic Azad University, Esfarayen, Iran.}

\email{zhosseini822@gmail.com}
\email{saeedi@mshdiau.ac.ir}
\email{darabi@iauesf.ac.ir}

\begin{abstract}
In this paper, we classify $(n+5)$-dimensional nilpotent $n$-Lie algebras of class two over the arbitrary field, when $n\ge 3$.
\end{abstract}

\maketitle
\section{\bf Introduction and Preliminaries}
The classification of low dimensional Lie algebras is one of the fundamental issues in Lie algebras Theory. The classification of Lie algebras can be found in many books and papers. In 1950 Morozov \cite{vm} proposed a classification of six-dimensional nilpotent Lie algebras over fields of characteristic 0. The classification of the 6-dimensional Lie algebras on the arbitrary field was shown by Cicalo et al.  \cite{sc.wg.cs}. Moreover, the 7-dimensional nilpotent Lie algebras over algebraically closed fields and real number field were classified in \cite{mg}. In 1985, Filippov \cite{vf} introduced the $n$-Lie algebras as a non-symmetrical linear vector space which satisfies the following Jacobian identity:
\[\left[ \left[ {{x}_{1}},{{x}_{2}},\ldots ,{{x}_{n}} \right],{{y}_{2}},\ldots ,{{y}_{n}} \right]=\sum\limits_{i=1}^{n}{\left[ {{x}_{1}},\ldots ,{{x}_{i-1}},\left[ {{x}_{i}},{{y}_{2}},\ldots ,{{y}_{n}} \right],{{x}_{i+1}},\ldots ,{{x}_{n}} \right]}\]
for all ${{x}_{i}},{{y}_{j}}\in L\,,\,1\le i\le n$, $2\le j\le n$. He also classified $n$-Lie algebras of dimensions $n$ and $n+1$ on the algebraically closed field with characteristic zero.

In 2008, Bai et al. \cite{rb.xw.wx}  classified $n$-Lie algebras of dimension $n+1$ on fields of characteristic two. Then, Bai \cite{rb.gs.yz} classified $n$-Lie algebras of dimension $n+2$ on the algebraically closed fields with characteristic zero.

Assume that ${{A}_{1}},\ldots \text{,}{{A}_{n}}$ are the subalgebra of $n$-Lie algebra $A$. Then, subalgebra $A$ generated by all vectors $\left[ {{x}_{1}},\ldots ,{{x}_{n}} \right]$$\left( {{x}_{i}}\in {{A}_{i}} \right)$ will be represented by the symbol $\left[ {{A}_{1}},\ldots ,{{A}_{n}} \right]$. The subalgebra ${{A}^{2}}=\left[ A,\ldots ,A \right]$ is called derived $n$-Lie algebra of $A$. The center of $n$-Lie algebra $A$ is defined as follows
\[Z\left( A \right)=\left\{ x\in A:\left[ x,A,\ldots ,A \right]=0 \right\}.\]
Assume ${{Z}_{0}}\left( A \right)=0$, then $i$th center of $A$ is defined inductively as ${{{Z}_{i}}\left( A \right)}/{{{Z}_{i-1}}\left( A \right)}\;=Z\left( {A}/{{{Z}_{i-1}}\left( A \right)}\; \right)$ for all $i\ge 1$. The notion of nilpotent $n$-Lie algebra was defined by Kasymov \cite{sk} as follows: We say that a $n$-Lie algebra $A$ is nilpotent if ${{A}^{s}}=0$ where, $s$ is a non-negative integer number. Note that ${{A}^{i}}$ is defined as induction by ${{A}^{1}}=A\,\,,\,{{A}^{i+1}}=\left[ {{A}^{i}},A,\ldots ,A \right]$. The $n$-Lie algebra $A$ is nilpotent of class $c$ if ${{A}^{c+1}}=0$, and ${{A}^{i}}\ne 0$, for each $i\le c$. 

An important category of $n$-Lie algebras of class 2, which plays an important role in nilpotent $n$-Lie algebras, is the Heisenberg $n$-Lie algebras. We call $n$-Lie algebra $A$, generalized Heisenberg of rank $k$, if ${{A}^{2}}=Z(A)$ and $\dim{{A}^{2}}=k$. 
In \cite{me.fs.hd1} the authors study the case when $k=1$, which is called later special Heisenberg $n$-Lie algebras.

The rest of our paper is organized as follows: Section 2 includes the results that are used frequently in the next section. In Section 3, we classify $(n+5)$-dimensional $n$-Lie algebras of class two.

\section{\bf Some Known Results}
In this section, we introduce some known and necessary results. 
\begin{theorem} \label{special Heisenberg}
(\cite{me.fs.hd1}) Every special Heisenberg $n$-Lie algebras have a dimension $mn+1$ for some natural number $m$, and it is isomorphic to
\[H\left( n,m \right)=\left\langle x,{{x}_{1}},\ldots ,{{x}_{nm}}:\left[ {{x}_{n\left( i-1 \right)+1}},{{x}_{n\left( i-1 \right)+2}},\ldots ,{{x}_{ni}} \right]=x,i=1,\ldots ,m \right\rangle .\]
\end{theorem}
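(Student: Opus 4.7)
The strategy is to translate the algebra data into a single alternating $n$-linear form on the abelian quotient $A/Z(A)$ and then bring that form into standard shape. First I would note that, since $A^{2}=Z(A)=\gen{x}$ is one-dimensional, every bracket $[a_{1},\ldots,a_{n}]$ is a scalar multiple of $x$, say $[a_{1},\ldots,a_{n}]=\phi(a_{1},\ldots,a_{n})x$. Because $Z(A)$ annihilates every bracket, $\phi$ descends to a well-defined alternating $n$-linear form $\phi\colon V^{n}\to F$, where $V=A/Z(A)$ is an $F$-vector space of dimension $\dim A-1$. The fact that $Z(A)=\gen{x}$ (and nothing larger) is equivalent to the non-degeneracy statement that every nonzero $\bar a\in V$ admits $\bar a_{2},\ldots,\bar a_{n}\in V$ with $\phi(\bar a,\bar a_{2},\ldots,\bar a_{n})\neq 0$.

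I would then argue by induction on $\dim V$ that $V$ carries a basis $e_{1},\ldots,e_{nm}$ in which $\phi$ takes the standard block form $\phi(e_{n(i-1)+1},\ldots,e_{ni})=1$, with all other basic evaluations vanishing; this forces $n\mid\dim V$. Since $\phi\neq 0$, I can pick $e_{1},\ldots,e_{n}\in V$ with $\phi(e_{1},\ldots,e_{n})=1$, and these vectors are automatically linearly independent by alternation, giving $\dim V\geq n$ and settling the base case $\dim V=n$. For the inductive step set $V_{1}=\gen{e_{1},\ldots,e_{n}}$ and
\[
V' = \set{\,v\in V : \phi(v,e_{i_{1}},\ldots,e_{i_{n-1}})=0 \text{ for all } 1\leq i_{1}<\cdots<i_{n-1}\leq n\,}.
\]
Putting $c_{j}=(-1)^{j-1}\phi(v,e_{1},\ldots,\widehat{e_{j}},\ldots,e_{n})$ yields the unique decomposition $v=\sum_{j}c_{j}e_{j}+v'$ with $v'\in V'$, so $V=V_{1}\oplus V'$. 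One then checks that $\phi$ restricts to a non-degenerate alternating $n$-form on $V'$, applies the inductive hypothesis to extend the basis to $e_{n+1},\ldots,e_{nm}$, and lifts the basis to $A$ (renormalizing by central corrections if needed) to recognize the presentation of $H(n,m)$.

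The main obstacle is arranging that the brackets mixing different blocks vanish, not merely the single-entry evaluations hard-coded into the definition of $V'$. For $n=2$ this is the usual symplectic-basis argument, but for $n\geq 3$ one must iteratively correct the chosen lifts: replacing a candidate $f\in A$ by $f-\sum c_{j}\tilde e_{j}$ kills brackets of the form $[f,\tilde e_{i_{1}},\ldots,\tilde e_{i_{n-1}}]$ against the first block, and analogous corrections must be performed against every earlier block and every partial evaluation. Carrying this normalization through all $m$ blocks, and checking that the surviving brackets are exactly the defining relations of $H(n,m)$ in Theorem~\ref{special Heisenberg}, constitutes the core technical content of the proof.
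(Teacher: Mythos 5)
The paper itself offers no proof of this statement---it is imported verbatim from \cite{me.fs.hd1}---so there is nothing internal to measure your attempt against. Your first paragraph is nevertheless the right way to see what is really being asserted: since $A^{2}=Z(A)=\gen{x}$ is one-dimensional and central, the Filippov identity holds vacuously, so these algebras correspond exactly to pairs $(V,\phi)$ with $\phi$ a nonzero alternating $n$-form on $V=A/Z(A)$ having zero radical, and the theorem becomes the purely linear-algebraic claim that every such form admits the block basis you describe.

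The gap is in the inductive step, and it is not a repairable technicality: for $n\ge 3$ the normal form you are trying to establish does not exist. Run your own induction on the $3$-form $\phi=e_1^{*}\wedge e_2^{*}\wedge e_3^{*}+e_1^{*}\wedge e_4^{*}\wedge e_5^{*}$ on $V=F^{5}$. Its radical is zero (evaluating a putative radical vector against the pairs $(e_2,e_3)$, $(e_1,e_3)$, $(e_1,e_2)$, $(e_1,e_5)$, $(e_1,e_4)$ kills all five of its coordinates), so the $6$-dimensional $3$-Lie algebra $\gen{e_1,\ldots,e_5,x:[e_1,e_2,e_3]=[e_1,e_4,e_5]=x}$ satisfies $A^{2}=Z(A)=\gen{x}$, yet $6\ne 3m+1$. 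Your base step gives $V_{1}=\gen{e_1,e_2,e_3}$ and your $V'$ is exactly $\gen{e_4,e_5}$; but $\phi|_{V'}$ is identically zero (a $3$-form on a $2$-dimensional space), so the inductive hypothesis does not apply, and the mixed evaluation $\phi(e_1,e_4,e_5)=1$---which has two arguments in $V'$ and one in $V_{1}$, hence is invisible both to the definition of $V'$ and to the single-vector corrections $f\mapsto f-\sum_{j}c_{j}\tilde e_{j}$ you propose---is precisely what keeps $\phi$ radical-free. The ``main obstacle'' you flag at the end is therefore a genuine obstruction, not bookkeeping: partial evaluations with between $2$ and $n-1$ arguments in an earlier block cannot be normalized away, and in dimension $6$ the radical-free form $e_1^{*}\wedge e_2^{*}\wedge e_4^{*}+e_1^{*}\wedge e_3^{*}\wedge e_5^{*}+e_2^{*}\wedge e_3^{*}\wedge e_6^{*}$ is likewise inequivalent to $e_1^{*}\wedge e_2^{*}\wedge e_3^{*}+e_4^{*}\wedge e_5^{*}\wedge e_6^{*}$. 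So no argument along these lines can close the gap; with ``special Heisenberg'' defined as in this paper ($A^{2}=Z(A)$, $\dim A^{2}=1$), the asserted dimension formula and normal form fail for $n\ge 3$, and a correct statement would have to either strengthen the hypotheses or enlarge the list of normal forms.
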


\begin{theorem}\label{dimA^2=1}
(\cite{hd.fs.me}) Let $A$ be a $d$-dimensional nilpotent $n$-Lie algebra and $\dim{{A}^{2}}=1$. Then for some $m\ge 1$
\[A\cong H\left( n,m \right)\oplus F\left( d-mn-1 \right).\]
\end{theorem}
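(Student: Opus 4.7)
My plan is to reduce the problem to a normal form question for an alternating $n$-linear form on $A/A^2$ with values in a one-dimensional space.

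First, I would show $A$ has nilpotency class at most two. Since $A^3 = [A^2, A, \ldots, A] \subseteq A^2$ and $\dim A^2 = 1$, either $A^3 = 0$ or $A^3 = A^2$. The latter case gives $A^k = A^2 \neq 0$ for all $k \ge 2$, contradicting nilpotency. Hence $A^3 = 0$ and $A^2 \subseteq Z(A)$. Fixing $x$ with $A^2 = Fx$, the bracket descends to an alternating $n$-linear map $\phi \colon (A/A^2)^n \to F$ via $[v_1, \ldots, v_n] = \phi(\bar v_1, \ldots, \bar v_n)\, x$.

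Second, I would peel off the central abelian part. Setting $V = A/A^2$ and
\[
\mathrm{rad}(\phi) = \{v \in V : \phi(v, v_2, \ldots, v_n) = 0 \text{ for all } v_i \in V\},
\]
a direct check gives $\mathrm{rad}(\phi) = Z(A)/A^2$. Any lift of $\mathrm{rad}(\phi)$ to $A$ lies in $Z(A)$ and meets $A^2$ trivially, providing an abelian ideal $Y$ that is a direct summand. Write $A = B \oplus Y$, where $Y$ corresponds to the abelian factor $F(d - 1 - \dim W)$ (with $W$ a complement of $\mathrm{rad}(\phi)$ in $V$), and the task reduces to showing $B \cong H(n,m)$; here $B$ satisfies $\dim B^2 = 1$ and $Z(B) = B^2$, and $\phi|_W$ is non-degenerate.

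The crux is to build a Heisenberg basis on $B$. Using non-degeneracy, pick $x_1, \ldots, x_n \in W$ with $\phi(x_1, \ldots, x_n) = 1$ and let $W_1 = \mathrm{span}(x_1, \ldots, x_n)$. For any linear complement $U$ of $W_1$ in $W$, each $u \in U$ can be replaced by $u' = u + \sum_{i=1}^n c_i(u)\, x_i$, with linear functionals $c_i$ on $U$ chosen so that $\phi(x_1, \ldots, \widehat{x_i}, \ldots, x_n, u') = 0$ for every $i$; this uses $\phi(x_1, \ldots, \widehat{x_i}, \ldots, x_n, x_j) = \pm \delta_{ij}$. After iterating analogous corrections to eliminate cross brackets of lower order in the $W_1$-entries, one obtains a complement $W' \subseteq W$ of $W_1$ with $\phi$ vanishing on every mixed tuple and $\phi|_{W'}$ still non-degenerate. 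Induction on $\dim W$ then forces $\dim W = mn$ and yields a basis witnessing $B \cong H(n, m)$, so $A \cong H(n, m) \oplus F(d - mn - 1)$.

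The principal obstacle is the iterated correction in the final step: for $n \ge 3$ there is no symplectic symmetry to exploit, so one must arrange the lower-order corrections without undoing the top-order ones. This delicate point is where the one-dimensionality of the target is essential, allowing the $c_i$'s to be chosen uniquely at each stage.
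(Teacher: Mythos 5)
The paper gives no proof of this statement---it appears in Section~2 as a known result quoted from \cite{hd.fs.me}---so there is nothing internal to compare your argument against; I can only assess it on its own terms. Your first two steps are sound: $A^3=0$ does follow from $\dim A^2=1$ together with nilpotency, the bracket then descends to an alternating $n$-linear form $\phi$ on $V=A/A^2$ with values in $A^2\cong F$, and splitting a complement of $A^2$ inside $Z(A)$ off as a central abelian direct factor correctly reduces everything to the case $Z(B)=B^2$, i.e.\ to putting an alternating $n$-form with trivial radical into the block form that defines $H(n,m)$.

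The gap is exactly at the step you yourself flagged as the ``principal obstacle,'' and it is not repairable: for $n\ge 3$ an alternating $n$-form with trivial radical need not decompose into $n$-dimensional blocks, and its domain need not have dimension divisible by $n$. Concretely, take $n=3$, $W=\langle e_1,\dots,e_5\rangle$ and $\phi=e_1^*\wedge e_2^*\wedge e_3^*+e_1^*\wedge e_4^*\wedge e_5^*$, i.e.\ the $6$-dimensional $3$-Lie algebra $B=\langle e_1,\dots,e_5,x\rangle$ with $[e_1,e_2,e_3]=[e_1,e_4,e_5]=x$ (the Filippov identity holds trivially since the algebra has class two). A direct check gives $B^2=Z(B)=\langle x\rangle$, so $\mathrm{rad}(\phi)=0$, yet $\dim W=5$ is not a multiple of $3$, and $B\not\cong H(3,m)\oplus F(5-3m)$ for any $m$: the $m=1$ candidate has a $3$-dimensional center and $m\ge 2$ is too large. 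In your scheme the first-order corrections succeed vacuously ($\phi(e_i,e_j,e_4)=\phi(e_i,e_j,e_5)=0$ for $1\le i<j\le 3$), but the lower-order cross term $\phi(e_1,e_4,e_5)=1$ is unchanged by adding elements of $W_1=\langle e_1,e_2,e_3\rangle$ to $e_4$ or $e_5$, because $\phi(e_1,e_j,e_5)=\phi(e_1,e_4,e_j)=0$ for $j=2,3$; so no complement $W'$ on which all mixed tuples vanish exists and the induction cannot proceed. In short, your outline is essentially the complete and correct proof for $n=2$, where the symplectic normal form does the work, but the statement as recorded here is false for $n\ge 3$; this also affects the places in Section~3 where the theorem is applied to quotients with one-dimensional derived subalgebra.
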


\begin{theorem}\label{A^2=Z(A)}
(\cite{hd.fs.me}) Let $A$ be a nilpotent $n$-Lie algebra of dimension $d=n+k$ for $3\le k\le n+1$ such that ${{A}^{2}}=Z\left( A \right)$ and $\dim{{A}^{2}}=2$. Then
\[A\cong \left\langle {{e}_{1}},\ldots ,{{e}_{n+k}}:\left[ {{e}_{k-1}},\ldots ,{{e}_{n+k-2}} \right]={{e}_{n+k}},\left[ {{e}_{1}},\ldots ,{{e}_{n}} \right]={{e}_{n+k-1}} \right\rangle .\]
This $n$-Lie algebra denoted by ${{A}_{n,d,k}}$.
\end{theorem}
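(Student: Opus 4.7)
The plan is to reduce to the $\dim A^2=1$ case of Theorem \ref{dimA^2=1} via a quotient, then lift the structure back to $A$. Let $\{z_1,z_2\}$ be a basis of $A^2=Z(A)$ and consider $B:=A/\langle z_1\rangle$, an $n$-Lie algebra of dimension $n+k-1$ with $\dim B^2=1$. By Theorem \ref{dimA^2=1}, $B\cong H(n,m)\oplus F(n+k-2-mn)$ for some $m\ge 1$. A dimension check shows that $m\ge 2$ forces $k\ge n+2$, contradicting $k\le n+1$, so $m=1$ and $B\cong H(n,1)\oplus F(k-2)$. Hence $B$ admits a basis $\bar x_1,\ldots,\bar x_n,\bar y_1,\ldots,\bar y_{k-2},\bar z$ with the single nonzero bracket $[\bar x_1,\ldots,\bar x_n]=\bar z$.

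Next, I would lift this basis to $A$: pick $x_i,y_j\in A$ projecting to $\bar x_i,\bar y_j$. After a rescaling and a substitution $z_2\mapsto z_2+\alpha z_1$, one may arrange $[x_1,\ldots,x_n]=z_2$. Since every bracket in $B$ other than $[\bar x_1,\ldots,\bar x_n]$ is zero, every bracket in $A$ other than $[x_1,\ldots,x_n]$ is a scalar multiple of $z_1$. Because $z_1\in A^2$, at least one such bracket must be a nonzero multiple of $z_1$, and it must involve at least one $y_j$ (since no other nonredundant bracket uses only $x_i$'s).

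The crucial step is to normalize the second bracket into the prescribed form. Concretely, one argues that after a further change of basis within the spans of the $x_i$ and the $y_j$, possibly followed by a rescaling, a single nontrivial second bracket $[x_{k-1},\ldots,x_n,y_1,\ldots,y_{k-2}]=z_1$ survives while every other bracket vanishes. The relabeling $e_i:=x_i$ for $1\le i\le n$, $e_{n+j}:=y_j$ for $1\le j\le k-2$, $e_{n+k-1}:=z_1$, $e_{n+k}:=z_2$ then exhibits the isomorphism $A\cong A_{n,d,k}$.

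The main obstacle is this final normalization: one must kill every extraneous $z_1$-valued bracket without disturbing $[x_1,\ldots,x_n]=z_2$, and simultaneously force the unique surviving mixed bracket to have exactly the prescribed overlap pattern, with $n-k+2$ of its entries drawn from $\{x_1,\ldots,x_n\}$ and $k-2$ from $\{y_1,\ldots,y_{k-2}\}$. This amounts to a normal-form theorem for $2$-dimensional pencils of alternating $n$-forms on the $(n+k-2)$-dimensional space $A/Z(A)$ under the $GL(A/Z(A))$-action, and the narrow range $3\le k\le n+1$ is precisely what forces the relevant orbit to be unique.
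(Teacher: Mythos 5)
First, note that the paper does not prove this statement at all: Theorem \ref{A^2=Z(A)} is imported verbatim from \cite{hd.fs.me}, so there is no in-paper argument to compare against and your proposal must stand on its own.

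Your reduction is sound as far as it goes: passing to $B=A/\langle z_1\rangle$, invoking Theorem \ref{dimA^2=1}, and ruling out $m\ge 2$ by the dimension count $mn\le n+k-2$ (which forces $k\ge n+2$ when $m\ge 2$) correctly yields $B\cong H(n,1)\oplus F(k-2)$, hence $[x_1,\ldots,x_n]=z_2$ after adjustment and all remaining brackets valued in $\langle z_1\rangle$. But the proof stops exactly where the theorem's content lies. You write that ``one argues that after a further change of basis \ldots a single nontrivial second bracket $[x_{k-1},\ldots,x_n,y_1,\ldots,y_{k-2}]=z_1$ survives while every other bracket vanishes,'' and then concede that this normalization is ``the main obstacle,'' recasting it as a normal-form problem for pencils of alternating $n$-forms whose solution you assert rather than supply. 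As written, nothing rules out, say, a $z_1$-component supported on many distinct $n$-subsets of the basis, and no mechanism is given for killing the extraneous terms; an unproved claim that ``the relevant orbit is unique'' is not a proof.

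The gap is closable, and the missing idea is symmetric use of your own quotient trick. Write the bracket as $[\,\cdot\,]=\phi_1(\,\cdot\,)z_1+\phi_2(\,\cdot\,)z_2$ with $\phi_1,\phi_2$ alternating $n$-forms on $V=A/Z(A)$, $\dim V=n+k-2$. Quotienting by $\langle z_2\rangle$ (not only by $\langle z_1\rangle$) and applying Theorem \ref{dimA^2=1} with the same dimension count shows that $\phi_1$, like $\phi_2$, is decomposable, i.e.\ equal to $\xi_1\wedge\cdots\wedge\xi_n$ for independent covectors; each $\phi_i$ therefore has a radical $R_i\subset V$ of dimension $k-2$, with annihilator the $n$-dimensional support $S_i\subseteq V^*$. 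The hypothesis $Z(A)=A^2$ says precisely $R_1\cap R_2=0$, equivalently $S_1+S_2=V^*$, so $\dim(S_1\cap S_2)=2n-(n+k-2)=n-k+2$; choosing a basis of $V^*$ adapted to the flag $S_1\cap S_2\subseteq S_1,S_2$ and rescaling puts the pair $(\phi_1,\phi_2)$ in exactly the stated normal form. Without this second decomposability input, your final step does not go through.
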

For unification of notation, in what follows the $k$th $d$-dimmensional $n$-lie algebra will be denoted by ${{A}_{n,d,k}}$ 
\begin{theorem}\label{d<=n+2}
(\cite{hd.fs.me}) Let $A$ be a non-abelian nilpotent $n$-Lie algebra of dimension $d\le n+2$. Then
\[A\cong H\left( n,1 \right),H\left( n,1 \right)\oplus F\left( 1 \right),{{A}_{n,n+2,1}}\]
where

$A_{n,n+2,1}=\gen{e_1,\ldots,e_{n+2}:[e_1,\ldots,e_n]=e_{n+1},[e_2,\ldots,e_{n+1}]=e_{n+2}}$.

\end{theorem}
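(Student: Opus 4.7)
The plan is to split the argument by the dimension $d\in\{n, n+1, n+2\}$ and, within each case, by $\dim A^2$. Whenever $\dim A^2=1$ I will apply Theorem~\ref{dimA^2=1} directly; for the other cases I will either derive a contradiction with nilpotency or construct $A_{n,n+2,1}$ through a normal-form argument on the brackets.

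The low-dimensional cases are quick. For $d=n$ I would fix a basis $e_1,\dots,e_n$ and set $v=[e_1,\dots,e_n]=\sum c_ie_i$; expanding $[v,e_1,\dots,\widehat{e_k},\dots,e_n]$ by multilinearity and alternation leaves only the term in which the $e_k$-coefficient survives, yielding $\pm c_k v$. If any $c_k\ne 0$ then $A^3=A^2$, contradicting nilpotency; if every $c_k=0$ then $A$ is abelian. For $d=n+1$ and $d=n+2$ the same style of manipulation proves a uniform bound $\dim A^2\le d-n$: if $\dim A/A^2\le n-1$, then any $n$ elements are linearly dependent modulo $A^2$, so alternation reduces every bracket to $\pm[z,x_1,\dots,\widehat{x_j},\dots,x_n]\in[A^2,A,\dots,A]=A^3$, forcing $A^2=A^3$ and contradicting nilpotency. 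Thus $d=n+1$ leaves only $\dim A^2=1$, and Theorem~\ref{dimA^2=1} with $m=1$ gives $A\cong H(n,1)$; while $d=n+2$ leaves $\dim A^2\in\{1,2\}$, and the first option yields $A\cong H(n,1)\oplus F(1)$ again by Theorem~\ref{dimA^2=1}.

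The remaining case $d=n+2$, $\dim A^2=2$ is the main obstacle, and I would treat it by constructing the structure explicitly. First, if $A$ were of class two then $A^2\subseteq Z(A)$ would kill every bracket involving an element of $A^2$; the only nontrivial bracket among a lifted basis of $A/A^2$ (which has dimension exactly $n$) would then span $A^2$, giving $\dim A^2\le 1$, a contradiction. So the class is at least three, and together with $A^3\subsetneq A^2$ from nilpotency this forces $\dim A^3=1$, $A^4=0$, and $A^3\subseteq Z(A)$. I would then pick $e_{n+2}$ spanning $A^3$, extend to a basis $e_{n+1},e_{n+2}$ of $A^2$, and lift a basis $e_1,\dots,e_n$ of $A/A^2$; after absorbing an $A^3$ correction into $e_{n+1}$ I may set $[e_1,\dots,e_n]=e_{n+1}$. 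Writing $[e_{n+1},e_1,\dots,\widehat{e_i},\dots,e_n]=c_ie_{n+2}$, the nonvanishing of $A^3$ gives some $c_i\ne 0$; after relabeling I may take $c_1\ne 0$, and the substitution $e_1\mapsto e_1+\sum_{i\ge 2}\lambda_ie_i$ with appropriately chosen $\lambda_i$ kills the remaining $c_i$, while leaving $[e_1,\dots,e_n]=e_{n+1}$ intact because the new summands produce brackets with a repeated basis vector. Rescaling $e_{n+2}$ normalizes $c_1=1$, and the centrality of $e_{n+2}$ together with alternation forces every other bracket to vanish, producing exactly the defining relations of $A_{n,n+2,1}$. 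The delicate bookkeeping in this final step---tracking how the substitution interacts with all brackets and ensuring the resulting normal form is independent of the choice of lifts---will be the most technical part of the argument.
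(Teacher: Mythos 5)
The paper does not prove this statement at all: it is imported verbatim from Darabi--Saeedi--Eshrati \cite{hd.fs.me} as a known result, so there is no internal proof to compare yours against. Judged on its own, your argument is essentially correct and complete. The dimension bound $\dim A^2\le d-n$ (via: if $\dim(A/A^2)<n$ then every bracket lands in $A^3$, forcing $A^2=A^3$ and hence $A^2=0$ by nilpotency) is the right key lemma; it disposes of $d\le n$ and reduces $d=n+1$ and the first branch of $d=n+2$ to Theorem~\ref{dimA^2=1}, where the constraint $mn+1\le d$ correctly pins down $m=1$ and yields $H(n,1)$ and $H(n,1)\oplus F(1)$. In the remaining case $d=n+2$, $\dim A^2=2$, your deduction that the class is exactly $3$ with $\dim A^3=1$ and $A^3\subseteq Z(A)$ is sound, and the normal-form computation goes through: $A^2=\gen{[e_1,\ldots,e_n]}\oplus A^3$ (so $[e_1,\ldots,e_n]\notin A^3$ and may be taken as $e_{n+1}$), $A^3$ is spanned by the $[e_{n+1},e_1,\ldots,\hat{e}_i,\ldots,e_n]=c_ie_{n+2}$, and the substitution $e_1\mapsto e_1+\sum_{i\ge2}\lambda_ie_i$ kills $c_2,\ldots,c_n$ without disturbing $[e_1,\ldots,e_n]=e_{n+1}$, after which rescaling $e_{n+2}$ gives exactly $A_{n,n+2,1}$. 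Two small points worth making explicit if you write this up: first, when you relabel to arrange $c_1\ne0$, the permutation changes $[e_1,\ldots,e_n]$ by a sign that must be reabsorbed into $e_{n+1}$ (and hence into the $c_i$); second, the whole argument relies on the bracket being alternating (vanishing on repeated arguments), not merely skew-symmetric, which matters in characteristic $2$ and should be stated as part of the definition you are working with.
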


\begin{theorem}\label{6 dimensional nilpotent}
(\cite{sc.wg.cs}) 
\begin{itemize}
\item[(1)]
Over a field $F$ of characteristic different from $2$, the list of the isomorphisms types of $6$-dimensional nilpotent Lie algebras is the following: $L_{5,k}\oplus F$ with $k\in\{1,\ldots,9\}$ ; $L_{6,k}$ with $k\in\{10,\ldots,18,20,23,25,\ldots,28\}$; $L_{6,k}(\vep_1)$ with $k\in\{19,21\}$ and $\vep_1\in F^*/(\overset{*}{\mathop \sim})$; $L_{6,k}(\vep_2)$ with $k\in\{22,24\}$ and $\vep_2\in F/(\overset{*}{\mathop \sim})$.

\item[(2)]
Over a field $F$ of characteristic $2$, the isomorphism types of $6$-dimensional nilpotent Lie algebras are:
$L_{5,k}\oplus F$ with $k\in\{1,\ldots,9\}$; $L_{6,k}$ with $k\in\{10,\ldots,18,20,23,25,\ldots,28\}$; $L_{6,k}(\vep_1)$ with $k\in\{19,21\}$ and $\vep_1\in F^*/(\overset{*}{\mathop \sim})$; $L_{6,k}(\vep_2)$ with $k\in\{22,24\}$ and $\vep_2\in F/(\overset{*+}{\mathop \sim})$; $L_{6,k}^{(2)}$ with $k\in\{1,2,5,6\}$; $L_{6,k}^{(2)}(\vep_3)$ with $k\in\{3,4\}$ and $\vep_3\in F^*/(\overset{*+}{\mathop \sim})$; $L_{6,k}^{(2)}(\vep_4)$ with $k\in\{7,8\}$ and $\vep_4\in\{0,\omega\}$.

\end{itemize}

\end{theorem}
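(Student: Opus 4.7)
The plan is to classify the $6$-dimensional nilpotent Lie algebras $L$ over an arbitrary field $F$ by stratifying on coarse structural invariants and then resolving residual parameters through change-of-basis arguments, with particular care in characteristic $2$. First I would separate the decomposable case: if $L = L' \oplus L''$ with both summands nonzero and nilpotent, then one summand must be one-dimensional and the other is a $5$-dimensional nilpotent Lie algebra, which yields the family $L_{5,k}\oplus F$ with $k\in\{1,\ldots,9\}$ from the known classification in dimension five. From here on I would assume $L$ is indecomposable.

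Next I would organize the indecomposable cases by the invariants $\dim L^2$, $\dim Z(L)$, $\dim(L^2\cap Z(L))$, and the nilpotency class $c$ (which satisfies $c\le 5$). For each admissible combination I would pick a basis adapted to the lower central series $L\supset L^2\supset L^3\supset\cdots$, express the nonzero brackets in terms of structure constants, and impose the Jacobi identity. This reduces the classification to finitely many parametric families of brackets; the sporadic types $L_{6,k}$ correspond to cases where the Jacobi constraints together with the available change-of-basis action pin down all structure constants, while the parametric families $L_{6,k}(\vep_1)$ and $L_{6,k}(\vep_2)$ arise precisely when a single residual scalar survives.

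The heart of the proof is to compute, for each parametric family, the residual change-of-basis action on the surviving parameter. A diagonal scaling of an appropriate generator by $\mu\in F^*$ typically sends $\vep\mapsto \mu^2\vep$, producing the relation $\overset{*}{\sim}$ on $F^*$ or $F$ in characteristic different from $2$. In characteristic $2$, additional affine substitutions of the form $\vep\mapsto\vep+\lambda^2+\lambda$ become available, and simultaneously certain identifications that depended on dividing by $2$ are lost; this both accounts for the Artin--Schreier-type relation $\overset{*+}{\sim}$ and forces the appearance of the genuinely new families $L^{(2)}_{6,k}$. The restricted parameters $\vep_4\in\{0,\omega\}$ in the last family reflect an invariant distinguishing a split case from a non-split one of an associated quadratic form.

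The main obstacle is twofold. Proving \emph{completeness} of the list requires a long and delicate case analysis of all invariant combinations, ruling out each one that does not appear by deriving a contradiction from the Jacobi identity; this is especially intricate in characteristic $2$, where several near-coincidences must be distinguished. Proving that the listed algebras are \emph{pairwise non-isomorphic}, and that two values of a parameter in a single family yield isomorphic algebras exactly when they are $\overset{*}{\sim}$- (resp.\ $\overset{*+}{\sim}$-)equivalent, requires assembling a complete set of isomorphism invariants---the dimensions of the terms of the upper and lower central series, the rank and type of the bilinear pairing induced on $L/L^2$ with values in $L^2$, and invariants extracted from the adjoint action---and checking that each listed algebra is determined by them together with the equivalence class of its parameter.
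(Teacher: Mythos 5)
This statement is not proved in the paper at all: it is imported verbatim from Cical\`o--de Graaf--Schneider \cite{sc.wg.cs} as a known classification, so there is no internal proof to compare your argument against. Judged on its own terms, what you have written is a programme rather than a proof. The two places where you say the difficulty lies --- the ``long and delicate case analysis'' establishing completeness, and the assembly of invariants establishing that the listed algebras (and distinct parameter classes within one family) are pairwise non-isomorphic --- are not incidental obstacles; they \emph{are} the theorem. Nothing in the proposal actually carries out a single case, exhibits the claimed residual action $\vep\mapsto\mu^2\vep$ for a concrete family, or verifies that the characteristic-$2$ families $L^{(2)}_{6,k}$ exhaust the new isomorphism types. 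As it stands the proposal could equally well be written for a list with a missing or spurious entry, so it cannot be accepted as a proof of this particular statement.

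It is also worth noting that your proposed route (stratify indecomposables by $\dim L^2$, $\dim Z(L)$, nilpotency class, then solve the Jacobi constraints in an adapted basis) is not the method of the cited source. Cical\`o, de Graaf and Schneider obtain the list by the Skjelbred--Sund method: every nilpotent Lie algebra of dimension $6$ is a central extension of a nilpotent Lie algebra of dimension $\le 5$, so one computes, for each algebra $K$ on the $5$-dimensional list, the orbits of $\Aut(K)$ on the relevant subspaces of $H^2(K,F)$ (those cocycles whose radical meets the center trivially), and the characteristic-$2$ phenomena, including the relation $\overset{*+}{\mathop\sim}$ and the families $L^{(2)}_{6,k}$, fall out of that orbit computation. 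That approach converts your open-ended case analysis into a finite, checkable (indeed machine-verifiable) computation, which is precisely what makes the completeness and non-redundancy claims tractable over an arbitrary field.
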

Eshrati et al. \cite{me.fs.hd2} classified $n$-Lie algebras  of $n+3$-dimensional for $n>2$. For the case $n=2$, we have 
\[H(2,2),H(2,1)\oplus F(2),L=<{{x}_{1}},...,{{x}_{5}}|[{{x}_{1}},{{x}_{2}}]={{x}_{4}},[{{x}_{1}},{{x}_{3}}]={{x}_{5}}>.\]
Additionally, for $n+4$-dimensional $n$-Lie algebras we have the following theorem:
\begin{theorem}\label{d=n+4}
(\cite{me.fs.hd2}) 
The only $(n+4)$-dimensional nilpotent $n$-Lie algebras of class two are:
$$ H(n,1)\oplus F(3), A_{n,n+4,1}, A_{n,n+4,2}, A_{n,n+4,3}, H(2,2)\oplus F(1), H(3,2), L_{6,22}^{{}}\left( \varepsilon  \right),\,L_{6,7}^{2}(\eta ).$$
\end{theorem}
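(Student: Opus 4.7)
The plan is to stratify by $t:=\dim A^2$. Class two gives $A^2\subseteq Z(A)$, and since any $n$-bracket containing a central element vanishes we have $\dim A/Z(A)\ge n$, forcing $\dim Z(A)\le 4$ and thus $t\in\{1,2,3,4\}$. For $t=1$, Theorem \ref{dimA^2=1} gives $A\cong H(n,m)\oplus F(n+4-mn-1)$ with $mn+1\le n+4$; this forces $m\in\{1,2\}$, and $m=2$ requires $n\le 3$. We recover $H(n,1)\oplus F(3)$ for every $n\ge 2$, the exceptional $H(2,2)\oplus F(1)$ when $n=2$, and $H(3,2)$ when $n=3$.

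For $t=2$ I would split on whether $A^2=Z(A)$. If yes, Theorem \ref{A^2=Z(A)} applies with $d=n+4$, $k=4$ (needing $n\ge 3$) and yields one canonical form. If $A^2\subsetneq Z(A)$, pick $c\in Z(A)\setminus A^2$ and choose a subspace complement $B$ to $\langle c\rangle$ containing $A^2$; then $B$ is automatically a subalgebra, and $A\cong B\oplus\langle c\rangle$ with $B$ an $(n+3)$-dimensional nilpotent $n$-Lie algebra of class two and $\dim B^2=2$. The $(n+3)$-dimensional classification cited immediately before Theorem \ref{d=n+4} enumerates the possibilities for $B$, and reassembling each with the central line $\langle c\rangle$ produces the remaining $A_{n,n+4,\star}$ algebras.

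For $t\in\{3,4\}$ the inequality $\dim A/Z(A)\ge n$ forces $\dim Z(A)=t$ and $\dim A/Z(A)\in\{n,n+1\}$. Viewed as an alternating $n$-linear map from $A/Z(A)$ to $A^2$, the bracket has image of dimension at most $\binom{n+1}{n}=n+1$, and together with the class-two constraint this is very restrictive; for $n\ge 3$ one checks that no further algebras arise. For $n=2$, however, the problem reduces to classifying the $6$-dimensional nilpotent Lie algebras of class two with $\dim A^2\ge 2$, and Theorem \ref{6 dimensional nilpotent} supplies the parametrized families $L_{6,22}(\vep)$ and $L_{6,7}^2(\eta)$ as the additional cases.

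The hard part is the subcase $t=2$ with $A^2\subsetneq Z(A)$: after fixing a basis adapted to the filtration $A\supset Z(A)\supset A^2$, one must reduce the structure constants under the action of the appropriate general-linear group and show that the resulting canonical forms $A_{n,n+4,\star}$ are pairwise non-isomorphic, using invariants such as $\dim Z(A)$, $\dim A^2$, and the rank of the bracket viewed as a multilinear map. A secondary technical point, specific to $n=2$, is to verify that the parametrized families from Theorem \ref{6 dimensional nilpotent} truly fall outside the generic $A_{n,n+4,\star}$ list and to pin down the equivalence classes of the parameters $\vep$ and $\eta$.
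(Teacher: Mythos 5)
This statement is quoted from \cite{me.fs.hd2} and the paper supplies no proof of it, so there is nothing internal to compare your argument against; I can only assess your proposal on its own terms. Your overall stratification by $t=\dim A^2$ is the natural one, the bound $t\le 4$ via $\dim A/Z(A)\ge n$ is correct, and the $t=1$ analysis (forcing $H(n,1)\oplus F(3)$, $H(2,2)\oplus F(1)$, $H(3,2)$) and the $t=2$ split into $A^2=Z(A)$ (Theorem~\ref{A^2=Z(A)} with $k=4$, giving $A_{n,n+4,2}$) versus $A^2\subsetneq Z(A)$ (splitting off a central line and descending to dimension $n+3$, giving $A_{n,n+4,1}$) are sound.

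The genuine gap is your treatment of $t=3$. You assert that for $n\ge 3$ ``no further algebras arise'' there, but $A_{n,n+4,3}$, with nonzero brackets $[e_1,\ldots,e_n]=e_{n+1}$, $[e_2,\ldots,e_n,e_{n+2}]=e_{n+3}$, $[e_1,e_3,\ldots,e_n,e_{n+2}]=e_{n+4}$, has $\dim A^2=3$ and exists for every $n\ge 2$; it is exactly the $t=3$ case. It also cannot be recovered from your $t=2$ reassembly (where you vaguely claim ``the remaining $A_{n,n+4,\star}$'' appear), since every basis vector occurs in a nonzero bracket, so it admits no abelian direct factor, and in any case its derived subalgebra has the wrong dimension. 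The correct $t=3$ analysis runs as follows: $\dim Z(A)=4$ would force $\dim A/Z(A)=n$ and hence $\dim A^2\le\binom{n}{n}=1$, so $Z(A)=A^2$ has dimension $3$ and $\dim A/Z(A)=n+1$; the bracket then factors through a linear map $\Lambda^n(A/Z(A))\to A^2$ from an $(n+1)$-dimensional space onto a $3$-dimensional space, and normalizing a surjective such map under base change yields precisely $A_{n,n+4,3}$. Your $t=4$ exclusion is correct by the same dimension count. The $n=2$ discussion (deferring to Theorem~\ref{6 dimensional nilpotent} for $L_{6,22}(\vep)$ and $L_{6,7}^{(2)}(\eta)$) is acceptable in outline but is asserted rather than carried out; as written, though, the missing $t=3$ stratum is the point at which your proof fails to produce an algebra that the theorem requires.
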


\begin{theorem}\label{7 dimensional nilpotent}
(\cite{mg}) The $7$-dimensional nilpotent Lie algebras of class two over algebraically closed fields and real number field are:
$$H(2,1)\oplus F(4),H(2,2)\oplus F(2),H(2,3), L_{7,i},\quad 1 \leq
i \leq 10.$$
\end{theorem}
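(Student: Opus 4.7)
The plan is to classify all seven-dimensional nilpotent Lie algebras $L$ of nilpotency class two over an algebraically closed field and over $\mathbb{R}$. Because the class is two, $[L,L]\subseteq Z(L)$ and $L/Z(L)$ is abelian, so the Lie bracket descends to an alternating bilinear map
\[\varphi\colon L/Z(L)\times L/Z(L)\longrightarrow [L,L].\]
Conversely, such a surjective alternating map uniquely reconstructs $L$ once a linear complement of $Z(L)$ is chosen, so the classification reduces to enumerating pairs $(\varphi,\text{abelian complement})$ up to the natural action of $\mathrm{GL}(L/Z(L))\times \mathrm{GL}([L,L])$.

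First I would extract any abelian direct summand by writing $L\cong L_0\oplus F(k)$ with $L_0$ having no abelian factor (equivalently $Z(L_0)=[L_0,L_0]$), and then stratify by the invariant $d:=\dim[L_0,L_0]\ge 1$. For $d=1$, Theorem~\ref{dimA^2=1} applied with $n=2$ forces $L_0\cong H(2,m)$ with $2m+1+k=7$, yielding the three algebras $H(2,1)\oplus F(4)$, $H(2,2)\oplus F(2)$, and $H(2,3)$.

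For $d=2$ I identify $\varphi$ with a pencil $\{\lambda\omega_1+\mu\omega_2\}$ of alternating bilinear forms on $L_0/Z(L_0)$ and classify such pencils up to the action of $\mathrm{GL}(L_0/Z(L_0))\times \mathrm{GL}_2$ via the Kronecker--Weierstrass theory of skew-symmetric matrix pencils; the no-abelian-factor hypothesis translates to $\ker\omega_1\cap\ker\omega_2=0$. For $d=3$ one analogously enumerates triples of alternating forms; here $\dim L_0/Z(L_0)\le 4$ and the $\mathrm{GL}_3$-action is so large that only a handful of normal forms survive. In each case I convert every candidate into an explicit multiplication table and separate the isomorphism classes by matching discrete invariants such as $(\dim Z(L),\dim[L,L],\mathrm{rk}\,\omega_1,\ldots)$.

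The main obstacle is the passage from the algebraic closure down to $\mathbb{R}$: a single orbit over $\overline{\mathbb{R}}$ can split into several inequivalent real orbits when the pencil has non-real eigenvalues, so the Kronecker--Weierstrass reduction must be performed over $\mathbb{R}$ directly rather than base-changed. One must also verify that no continuous parameter families survive in dimension seven (which is true here, though it fails in higher dimensions, where families such as $L_{6,22}(\varepsilon)$ already appear in lower rank as in Theorem~\ref{d=n+4}). Collecting the $d=1,2,3$ contributions yields the three decomposable Heisenberg algebras together with exactly ten indecomposable algebras $L_{7,1},\ldots,L_{7,10}$, matching the statement.
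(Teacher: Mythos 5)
The paper offers no proof of this statement: Theorem \ref{7 dimensional nilpotent} is imported verbatim from Gong's thesis \cite{mg}, so there is no internal argument to measure yours against. Judged on its own terms, your strategy is the standard one for class-two nilpotent Lie algebras: strip abelian direct summands, note that the reduced algebra $L_0$ satisfies $Z(L_0)=[L_0,L_0]$, and classify the induced surjection $\Lambda^2\bigl(L_0/Z(L_0)\bigr)\to[L_0,L_0]$ up to the action of $\mathrm{GL}\bigl(L_0/Z(L_0)\bigr)\times\mathrm{GL}\bigl([L_0,L_0]\bigr)$. The stratification by $d=\dim[L_0,L_0]$ is correct and necessarily stops at $d=3$ (for $d=4$ one would need $\dim L_0/Z(L_0)\le 3$ and $\binom{3}{2}<4$), the $d=1$ case via Theorem \ref{dimA^2=1} does yield exactly $H(2,1)\oplus F(4)$, $H(2,2)\oplus F(2)$, $H(2,3)$, and your warning that the Kronecker--Weierstrass reduction must be performed over $\mathbb{R}$ directly rather than base-changed is well placed.

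The genuine gap is that the entire content of the theorem lives in the $d=2$ and $d=3$ cases, and these are only gestured at: you never produce the normal forms of pencils (for $d=2$, $\dim L_0\in\{5,6,7\}$) or of triples of alternating forms (for $d=3$, $\dim L_0\in\{6,7\}$), never verify that exactly ten algebras beyond the three Heisenberg ones survive, and never check that no complex orbit splits into several real ones --- which is precisely the assertion that makes the real and algebraically closed lists coincide. Two smaller inaccuracies: several of the $L_{7,i}$ are \emph{not} indecomposable (from Table I, $L_{7,1}$ is a five-dimensional algebra plus $F(2)$, and $L_{7,4}\cong H(2,1)\oplus H(2,1)\oplus F(1)$), so your closing sentence mischaracterizes the list --- the $L_{7,i}$ are simply the seven-dimensional class-two algebras with $\dim[L,L]\ge 2$, decomposable or not; and $\mathrm{rk}\,\omega_1$ is not an isomorphism invariant of $L$ (it depends on the chosen basis of $[L,L]$), so for separating isomorphism classes you should use the multiset of ranks over the whole pencil, or the Kronecker invariants themselves. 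As written this is a correct plan for a proof, not a proof.
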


\section{\bf Classification of (n+5)-dimensional nilpotent $n$-Lie algebras of class two}
\vskip 0.4 true cm
In this section, we classify $(n+5)$-dimensional nilpotent $n$-Lie algebras of class two. The $n$-Lie algebra $A$ is nilpotent of class two, when $A$ is non-abelian and  ${{A}^{2}}\subseteq Z\left( A \right)$. 
The nilpotent $n$-Lie algebra of class two plays an essential role in some geometry problems such as the commutative Riemannian manifold. Additionally, the classification of nilpotent Lie algebras of class two is one of the important issues in Lie algebras.

We first prove a lemma for $3$-Lie algebras.
\begin{lemma}\label{3-Lie algebra of dimension 8}
Let $A$ be a 3-Lie algebra of dimension 8 such that ${{A}^{2}}=Z\left( A \right)$ and  $\dim{{A}^{2}}=2$. Then
\[A=\left\langle e_1,\ldots,e_8:[e_1,e_2,e_3]=e_7,[e_4,e_5,e_6]=e_8 \right\rangle .\] 
\end{lemma}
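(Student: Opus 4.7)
The plan is to reduce the problem to the $7$-dimensional classification (Theorem~\ref{dimA^2=1}) by passing to quotients of $A$. Put $V := A/Z(A)$; since $Z(A)=A^2$ has dimension $2$, $V$ is $6$-dimensional, and the trilinear bracket descends to an alternating surjection $\varphi\colon V\times V\times V\to A^2$ whose nondegeneracy (no nonzero $v\in V$ kills $\varphi(v,\cdot,\cdot)$) is equivalent to $Z(A)=A^2$. Fixing a basis $\{z_1,z_2\}$ of $A^2$ identifies $\varphi$ with a linearly independent pair $\varphi_1,\varphi_2\in\Lambda^3V^*$, and the goal becomes to show the $2$-dimensional pencil $P:=\mathrm{span}(\varphi_1,\varphi_2)$ is equivalent under $GL(V)\times GL(A^2)$ to $\mathrm{span}(e_1^*\wedge e_2^*\wedge e_3^*,\,e_4^*\wedge e_5^*\wedge e_6^*)$.

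The first step is to produce $z\in A^2\setminus\{0\}$ with $A/\gen{z}\cong H(3,2)$. Any such quotient is a $7$-dimensional nilpotent $3$-Lie algebra of class at most two with $\dim(A/\gen{z})^2=1$, so by Theorem~\ref{dimA^2=1} it is isomorphic to $H(3,m)\oplus F(6-3m)$ for some $m\in\{1,2\}$. The condition $A/\gen{z}\cong H(3,1)\oplus F(3)$ is equivalent to the $3$-form on $V$ obtained by post-composing $\varphi$ with any linear functional $A^2\to F$ annihilating $z$ being decomposable. If $m=1$ for every nonzero $z$, every form in the pencil $P$ is decomposable, so the projective line $\mathbb{P}(P)$ lies in the Grassmannian $\mathrm{Gr}(3,V^*)\subset\mathbb{P}(\Lambda^3V^*)$. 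Every such line is a Schubert line $\{B\in\mathrm{Gr}(3,V^*):H\subseteq B\subseteq W\}$ for some fixed $H\subseteq W$ with $\dim H=2,\dim W=4$; the $3$-dimensional kernels $B^\perp\subseteq V$ of the corresponding forms then all contain the nonzero $2$-plane $W^\perp$, contradicting nondegeneracy. Hence some $z_1$ gives $A/\gen{z_1}\cong H(3,2)$.

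Pick such a $z_1$ and a basis $\bar e_1,\ldots,\bar e_6,\bar z_2$ of $A/\gen{z_1}$ giving the standard $H(3,2)$-presentation $[\bar e_1,\bar e_2,\bar e_3]=[\bar e_4,\bar e_5,\bar e_6]=\bar z_2$ with all other brackets zero. Lift to $e_1,\ldots,e_6,z_2\in A$; the brackets in $A$ then take the form
\[
[e_1,e_2,e_3]=z_2+\alpha z_1,\quad [e_4,e_5,e_6]=z_2+\beta z_1,\quad [e_i,e_j,e_k]=\gamma_{ijk}z_1
\]
on the $18$ remaining triples. Replacing $z_2$ by $z_2+\alpha z_1$ reduces to $\alpha=0$. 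The remaining task is to show that $\beta\neq 0$ and every $\gamma_{ijk}=0$ after a further change of basis of $V$ that preserves the $H(3,2)$ structure of the first quotient (possibly combined with rescaling in $A^2$); setting $e_7:=z_2$ and $e_8:=\beta z_1$ will then yield the stated presentation.

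The main obstacle is this final normalization. The stabilizer of the $H(3,2)$ form in $GL(V)$ is essentially $(SL_3\times SL_3)\rtimes S_2$, of dimension $16$, while the mixed brackets contribute $18$ scalar parameters, so a naive dimension count does not suffice. The expected remedy is to apply Theorem~\ref{dimA^2=1} a second time to the alternative quotient $A/\gen{z_2}$ (which also has $1$-dimensional derived ideal and so must be of the form $H(3,m')\oplus F(\ldots)$) and to combine the structural constraints from both quotients with the nondegeneracy of $\varphi$ to force every $\gamma_{ijk}$ to vanish simultaneously.
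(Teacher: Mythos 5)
Your reduction to a pencil $P=\mathrm{span}(\varphi_1,\varphi_2)\subseteq\Lambda^3V^*$ on $V=A/Z(A)$ is set up correctly, and your first step --- using the fact that a projective line contained in $\mathrm{Gr}(3,V^*)$ must be a Schubert line to produce a central $z_1$ with $A/\gen{z_1}\cong H(3,2)$ --- is correct and genuinely different from the paper, which instead runs a case split on the isomorphism type of $A/\gen{e_7}$. But your argument stops exactly where the content of the lemma begins: the final paragraph only names an ``expected remedy'' (apply Theorem \ref{dimA^2=1} a second time to $A/\gen{z_2}$ and combine constraints) without executing it. That is a genuine gap, and your own observation that the $18$ mixed coefficients outnumber the $16$-dimensional stabilizer of the $H(3,2)$-form is already evidence that the final normalization cannot go through.

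In fact the gap is unfillable, because the lemma as stated is false, and your formalism exposes this. The number of decomposable members of the line $\mathbb{P}(P)$ is an isomorphism invariant of $A$; for the stated algebra the pencil $\gen{e_1^*\wedge e_2^*\wedge e_3^*,\,e_4^*\wedge e_5^*\wedge e_6^*}$ has exactly two. Now let $A$ be the $8$-dimensional $3$-Lie algebra with nonzero brackets
\[
[e_1,e_2,e_3]=[e_4,e_5,e_6]=e_7,\qquad[e_1,e_2,e_4]=e_8.
\]
Since every bracket is central, the Filippov identity holds trivially; $A^2=\gen{e_7,e_8}$ is $2$-dimensional, and contracting the two associated forms with an arbitrary $v=\sum_{i=1}^6a_ie_i$ (against $e_2\wedge e_3$, $e_1\wedge e_3$, $e_1\wedge e_2$, $e_4\wedge e_5$, $e_4\wedge e_6$) forces $a_1=\dots=a_6=0$, so $Z(A)=A^2$. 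Here $\varphi_1=e_1^*\wedge e_2^*\wedge e_3^*+e_4^*\wedge e_5^*\wedge e_6^*$ and $\varphi_2=e_1^*\wedge e_2^*\wedge e_4^*$, and for $s\neq0$ the form $s\varphi_1+t\varphi_2$ has support equal to all of $V^*$, hence is indecomposable; so this pencil has exactly one decomposable member and $A$ is not isomorphic to the algebra in the statement. (The paper's own proof breaks at the analogous point: in its case (ii) it asserts that a nonzero mixed coefficient $\beta_{i,j,k}$ would violate $A^2=Z(A)$, which is true only when the complementary generator of $A^2$ comes from a decomposable form, as in its case (i).) What your approach can honestly deliver is a reduction of the lemma to the classification of nondegenerate pencils in $\Lambda^3F^6$, of which the stated algebra is only one equivalence class.
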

\begin{proof}
Let $A=\left\langle {{e}_{1}},\ldots ,{{e}_{8}} \right\rangle $ and ${{A}^{2}}=Z\left( A \right)=\left\langle {{e}_{7}},{{e}_{8}} \right\rangle $. We may assume that $\left[ {{e}_{4}},{{e}_{5}},{{e}_{6}} \right]={{e}_{8}}$. So, there are $\alpha _0,\beta _0, \alpha_{i,j,k}, \beta_{i,j,k}$ of $F$ such that 
\[\left\{ \begin{array}{lr}
 \left[ {{e}_{1}},{{e}_{2}},{{e}_{3}} \right]={{\alpha }_{0}}{{e}_{8}}+{{\beta }_{0}}{{e}_{7}}& \\ 
 \left[ {{e}_{i}},{{e}_{j}},{{e}_{k}} \right]={{\alpha }_{i,j,k}}{{e}_{8}}+{{\beta }_{i,j,k}}{{e}_{7}},&\text{}1\le i<j<k\le 6,\left( i,j,k \right)\ne \left( 1,2,3 \right),\left( 4,5,6 \right). \\
\end{array} \right.\]
Taking $I=\left\langle {{e}_{7}} \right\rangle $, thus $\dim{{\left( {A}/{I}\; \right)}^{2}}=1$. Therefore, by Theorem \ref{dimA^2=1}, ${A}/{I}\;$ is isomorphic to $H\left( 3,1 \right)\oplus F\left( 3 \right)$ or $H\left( 3,2 \right)\,$.

(i) Assume that ${A}/{I}\;\cong H\left( 3,1 \right)\oplus F\left( 3 \right)$. In this case, according to the structure of ${A}/{I}\;$, we have ${{\alpha }_{0}}={{\alpha }_{i,j,k}}=0$. Therefore, the brackets of $A$ are as follows:
\[\begin{cases}
[e_4,e_5,e_6]=e_8, \quad \quad [e_1,e_2,e_3]=\be_0e_7,&\\
[e_i,e_j,e_k]=\be_{i,j,k}e_7, \quad \quad 1\leq i<j<k\leq 6,(i,j,k)\ne (1,2,3),(4,5,6).
\end{cases}\]
Now, by choosing $J=\left\langle {{e}_{8}} \right\rangle $, and by taking into account $\dim{{\left( {A}/{J}\; \right)}^{2}}=1$, we have

$A/J\cong\gen{\ol{e}_1,\ldots,\ol{e}_7:[\ol{e}_1,\ol{e}_2,\ol{e}_3]=\be_0\ol{e}_7,[\ol{e}_i,\ol{e}_j,\ol{e}_k]=\be_{i,j,k}\ol{e}_7, 1\leq i<j<k\leq 6,(i,j,k)\ne (1,2,3),(4,5,6)}.$ According to the above brackets and special Heisenberg $n$-Lie algebra, the above algebra is isomorphic to $H\left( 3,1 \right)\oplus F\left( 3 \right)$. So, only one of  ${{\beta }_{0}}, {{\beta }_{i,j,k}}$ is equal to one and the other coefficients are zero.  

If one of the coefficients ${{\beta }_{i,j,k}}$  equal to one, then the condition ${{A}^{2}}=Z\left( A \right)$ will be false. So, we conclude $\beta_{i,j,k}=0$ for each $1\le i<j<k\le 6,\text{ }\left( i,j,k \right)\ne \left(1,2,3 \right),\left( 4,5,6 \right)$. Thus 
\[{{A}_{3,8,1}}=<{{e}_{1}},...,{{e}_{8}}|[{{e}_{1}},{{e}_{2}},{{e}_{3}}]={{e}_{8}},[{{e}_{4}},{{e}_{5}},{{e}_{6}}]={{e}_{7}}>.\]
(ii) Consider ${A}/{I}\;\cong H\left( 3,2 \right)$. According to the structure of ${A}/{I}\;$, we have ${{\alpha }_{0}}=1$ and ${{\alpha }_{i,j,k}}=0$. Therefore, the brackets of $A$:
\[\begin{cases}
[e_4,e_5,e_6]=e_8, \quad \quad [e_1,e_2,e_3]=e_8+\be_0e_7,&\\
[e_i,e_j,e_k]=\be_{i,j,k}e_7, \quad \quad 1\leq i<j<k\leq 6,(i,j,k)\ne (1,2,3),(4,5,6).
\end{cases}\]
Now, by choosing $J=\left\langle {{e}_{8}} \right\rangle $, we have  $\dim{{\left( {A}/{J}\; \right)}^{2}}=1$. Therefore, with respect to the structure of special Heisenberg $n$-Lie algebras, this algebra is isomorphic to   $H\left( 3,1 \right)\oplus F\left( 3 \right)$. So, only one of coefficients $ \beta_0, \beta _{i,j,k}$ is equal to one and the other coefficients are zero. Of course, if one of the coefficients ${{\beta }_{i,j,k}}$ is equal to one, we have a contradiction. So, we have ${{\beta }_{i,j,k}}=0$ for each $1\le i<j<k\le 6,\left( i,j,k \right)\ne \left( 1,2,3 \right),\left( 4,5,6 \right)$. Therefore, the brackets of $A$ are as follows: 
\[\left[ {{e}_{4}},{{e}_{5}},{{e}_{6}} \right]={{e}_{8}},\text{ }\left[ {{e}_{1}},{{e}_{2}},{{e}_{3}} \right]={{e}_{8}}+{{e}_{7}}.\]
By interchanging
$$e'_i=e_i,\quad1\leq i\leq 8, i \ne 7, \quad \quad e'_7=e_8+e_7$$
this algebra is isomorphic to ${{A}_{3,8,1}}$. so, the proof is completed.
\end{proof}
Now we are going to classify $(n+5)$-dimensional nilpotent $n$-Lie algebras of class two.

Assume that  $A$ is a $(n+5)$-dimensional nilpotent $n$-Lie algebras of class two where $n\ge 3$ and $A=\left\langle {{e}_{1}},\ldots ,{{e}_{n+5}} \right\rangle $ (see Theorem \ref{7 dimensional nilpotent} for the case $n=2$). If $\dim{{A}^{2}}=1$, then by Theorem \ref{dimA^2=1}, $A$ is isomorphic to one of the following algebras:
\[H\left( n,1 \right)\oplus F\left( 4 \right),\text{ }H\left( 3,2 \right)\oplus F\left( 1 \right),\text{ }H\left( 4,2 \right).\] 
Now, assume that $\dim{{A}^{2}}\ge 2$ and $\left\langle {e_{n+4}},{e_{n+5}} \right\rangle \subseteq {{A}^{2}}$. Therefore, ${A}/{\left\langle {{e}_{n+5}} \right\rangle }\;$  is a  $(n+4)$-dimensional nilpotent $n$-Lie algebras of class 2. It follows from Theorem \ref{d=n+4} that ${A}/{\left\langle {{e}_{n+5}} \right\rangle }\;$ is one of the following forms: 
\[H\left( n,1 \right)\oplus F\left( 3 \right),\text{ }{{A}_{n,n+4,1}},\text{ }{{A}_{n,n+4,2}},\text{ }{{A}_{n,n+4,3}},\text{ }H\left( 3,2 \right).\]

\medskip

Case 1: $A/\gen{e_{n+5}}\cong\gen{\ol{e}_1,\ldots,\ol{e}_{n+4}:[\ol{e}_1,\ldots,\ol{e}_n]=\ol{e}_{n+4}} \cong H(n,1) \oplus F(3)$. 

The brackets of $A$ are as follows:
\[\left\{ \begin{array}{ll}
 \left[ {{e}_{1}},{{e}_{2}},\ldots ,{{e}_{n}} \right]={{e}_{n+4}}+\alpha {{e}_{n+5}}& \\ 
 \left[ {{e}_{1}},\ldots ,{{{\hat{e}}}_{i}},\ldots ,{{e}_{n}},{{e}_{n+1}} \right]={{\alpha }_{i}}{{e}_{n+5}}&\text{ }1\le i\le n \\ 
 \left[ {{e}_{1}},\ldots ,{{{\hat{e}}}_{i}},\ldots ,{{e}_{n}},{{e}_{n+2}} \right]={{\beta }_{i}}{{e}_{n+5}},&\text{ }1\le i\le n \\ 
 \left[ {{e}_{1}},\ldots ,{{{\hat{e}}}_{i}},\ldots ,{{e}_{n}},{{e}_{n+3}} \right]={{\gamma }_{i}}{{e}_{n+5}},&\text{ }1\le i\le n \\ 
 \left[ {{e}_{1}},\ldots ,{{{\hat{e}}}_{i}},\ldots ,{{{\hat{e}}}_{j}},\ldots ,{{e}_{n}},{{e}_{n+1}},{{e}_{n+2}} \right]={{\theta }_{ij}}{{e}_{n+5}},&\text{ }1\le i<j\le n \\ 
 \left[ {{e}_{1}},\ldots ,{{{\hat{e}}}_{i}},\ldots ,{{{\hat{e}}}_{j}},\ldots ,{{e}_{n}},{{e}_{n+1}},{{e}_{n+3}} \right]={{\mu }_{ij}}{{e}_{n+5}},&\text{ }1\le i<j\le n \\ 
 \left[ {{e}_{1}},\ldots ,{{{\hat{e}}}_{i}},\ldots ,{{{\hat{e}}}_{j}},\ldots ,{{e}_{n}},{{e}_{n+2}},{{e}_{n+3}} \right]={{\lambda }_{ij}}{{e}_{n+5}},&\text{ }1\le i<j\le n \\ 
 \left[ {{e}_{1}},\ldots ,{{{\hat{e}}}_{i}},\ldots ,{{{\hat{e}}}_{j}},\ldots ,{{{\hat{e}}}_{k}},\ldots ,{{e}_{n}},{{e}_{n+1}},{{e}_{n+2}},{{e}_{n+3}} \right]={{\zeta }_{ijk}}{{e}_{n+5}},&\text{ }1\le i<j<k\le n \\ 
\end{array} \right..\]
By changing the base, we can result $\alpha =0$. Since $\dim{{A}^{2}}\ge 2$, we get  $\dim Z\left( A \right)\le 4$. 

First, we assume that  $\dim Z\left( A \right)=4$. In this case, without lose of generality, we can assume that $Z\left( A \right)=\left\langle {{e}_{n+2}},{{e}_{n+3}},{{e}_{n+4}},{{e}_{n+5}} \right\rangle $.  Consequently, the non-zero brackets of $A$ are as follows:

\[\left\{ \begin{array}{ll}
\left[ {{e}_{1}},{{e}_{2}},\ldots ,{{e}_{n}} \right]={{e}_{n+4}}& \\ 
\left[ {{e}_{1}},\ldots ,{{{\hat{e}}}_{i}},\ldots ,{{e}_{n}},{{e}_{n+1}} \right]={{\alpha }_{i}}{{e}_{n+5}},&\text{ }1\le i\le n \\ 
\end{array} \right..\]
At least one of the ${{\alpha }_{i}}$'s is non-zero. Without lose of generality, we can assume that ${{\alpha }_{1}}\ne 0$. By interchanging
\[e'_1=e_1+\sum_{i=2}^n(-1)^{i-1}\frac{\alpha_i}{\alpha_1}e_i,\quad e'_j=e_j,\quad2\leq j\leq n+4,\quad e'_{n+5}=\alpha_1e_{n+5}\]
we have
\[\left[ {{e}_{1}},\ldots ,{{e}_{n}} \right]={{e}_{n+4}},\text{ }\left[ {{e}_{2}},\ldots ,{{e}_{n+1}} \right]={{e}_{n+5}}.\]
We denote this algebra by ${{A}_{n,n+5,1}}$. Now, suppose $\dim Z\left( A \right)=3$.  Without lose of generality, we assume $Z\left( A \right)=\left\langle {{e}_{n+3}},{{e}_{n+4}},{{e}_{n+5}} \right\rangle $.  Therefore, the brackets of A are as follows:
\[\left\{ \begin{array}{ll}
 \left[ {{e}_{1}},{{e}_{2}},\ldots ,{{e}_{n}} \right]={{e}_{n+4}}& \\ 
 \left[ {{e}_{1}},\ldots ,{{{\hat{e}}}_{i}},\ldots ,{{e}_{n}},{{e}_{n+1}} \right]={{\alpha }_{i}}{{e}_{n+5}},&\text{ }1\le i\le n \\ 
 \left[ {{e}_{1}},\ldots ,{{{\hat{e}}}_{i}},\ldots ,{{e}_{n}},{{e}_{n+2}} \right]={{\beta }_{i}}{{e}_{n+5}},&\text{ }1\le i\le n \\ 
 \left[ {{e}_{1}},\ldots ,{{{\hat{e}}}_{i}},\ldots ,{{{\hat{e}}}_{j}},\ldots ,{{e}_{n}},{{e}_{n+1}},{{e}_{n+2}} \right]={{\theta }_{ij}}{{e}_{n+5}},&\text{ }1\le i<j\le n \\ 
\end{array} \right..\]
‌Since $\dim{{\left( {A}/{\left\langle {{e}_{n+4}} \right\rangle }\; \right)}^{2}}=1$, we have ${A}/{\left\langle {{e}_{n+4}} \right\rangle }\;\cong H\left( n,1 \right)\oplus F\left( 3 \right)$. According to the structure of $n$-Lie algebras, we conclude that one of the coefficients
$${{\theta }_{ij}}\left( 1\le i<j\le n \right),{{\beta }_{i}}\left( 1\le i\le n \right),{{\alpha }_{i}}\left( 1\le i\le n \right)$$ 
is equal to one, and the others are zero. According to $Z\left( A \right)=\left\langle {{e}_{n+3}},{{e}_{n+4}},{{e}_{n+5}} \right\rangle $,   the coefficients of ${{\theta }_{ij}}\left( 1\le i<j\le n \right),{{\beta }_{i}}\left( 1\le i\le n \right),{{\alpha }_{i}}\left( 1\le i\le n \right)$ can not be equal to one. Without loss of generality, assume that
$${{\theta }_{12}}=1,{{\theta }_{ij}}=0\left( 1\le i<j\le n,\left( i,j \right)\ne \left( 1,2 \right) \right),{{\beta }_{i}}=0\left( 1\le i\le n \right),{{\alpha }_{i}}=0\left( 1\le i\le n \right).$$
So, the brackets of $A$ are as follows:
\[\left[ {{e}_{1}},\ldots ,{{e}_{n}} \right]={{e}_{n+4}},\text{ }\left[ {{e}_{3}},\ldots ,{{e}_{n+2}} \right]={{e}_{n+5}}.\]
We denote this algebra by ${{A}_{n,n+5,2}}$.

Now, assume that $\dim Z\left( A \right)=2$. Therefore $Z\left( A \right)={{A}^{2}}$. In the case $n\ge 4$, by Theorem \ref{A^2=Z(A)} we reach the algebra  ${{A}_{n,n+5,3}}$.

In the case $n=3$, according to Lemma \ref{3-Lie algebra of dimension 8}, the desired algebra is
\[A=\left\langle {{e}_{1}},\ldots ,{{e}_{8}}:\left[ {{e}_{1}},{{e}_{2}},{{e}_{3}} \right]={{e}_{7}},\left[ {{e}_{4}},{{e}_{5}},{{e}_{6}} \right]={{e}_{8}} \right\rangle .\]
Which is denoted by ${{A}_{3,8,7}}$.
 
\medskip

Case 2: $A/\gen{e_{n+5}}\cong\gen{\ol{e}_1,\ldots,\ol{e}_{n+4}:[\ol{e}_1,\ldots,\ol{e}_n]=\ol{e}_{n+3},[\ol{e}_2,\ldots,\ol{e}_{n+1}]=\ol{e}_{n+4}}$. 

In this case  $\dim{{A}^{2}}=3$, so  ${{A}^{2}}=\left\langle {{e}_{n+3}},{{e}_{n+4}},{{e}_{n+5}} \right\rangle $. Thus $3\le \dim Z\left( A \right)\le 4$.  Hence the bracket of $A$ are as follows: 
\[\left\{ \begin{array}{ll}
 \left[ {{e}_{1}},{{e}_{2}},\ldots ,{{e}_{n}} \right]={{e}_{n+3}}+\alpha {{e}_{n+5}}& \\ 
 \left[ {{e}_{2}},\ldots ,{{e}_{n+1}} \right]={{e}_{n+4}}+\beta {{e}_{n+5}}& \\ 
 \left[ {{e}_{1}},\ldots ,{{{\hat{e}}}_{i}},\ldots ,{{e}_{n}},{{e}_{n+1}} \right]={{\alpha }_{i}}{{e}_{n+5}},&\text{ }2\le i\le n \\ 
 \left[ {{e}_{1}},\ldots ,{{{\hat{e}}}_{i}},\ldots ,{{e}_{n}},{{e}_{n+2}} \right]={{\beta }_{i}}{{e}_{n+5}},&\text{ }1\le i\le n \\ 
 \left[ {{e}_{1}},\ldots ,{{{\hat{e}}}_{i}},\ldots ,{{{\hat{e}}}_{j}},\ldots ,{{e}_{n}},{{e}_{n+1}},{{e}_{n+2}} \right]={{\theta }_{ij}}{{e}_{n+5}},&\text{ }1\le i<j\le n \\ 
\end{array} \right..\]
By changing the base, we can take $\alpha =\beta =0$.  First, we assume that $\dim Z\left( A \right)=4$. It follows that $Z\left( A \right)=\left\langle {{e}_{n+2}},{{e}_{n+3}},{{e}_{n+4}},{{e}_{n+5}} \right\rangle $. Therefore   
\[\left\{ \begin{array}{ll}
 \left[ {{e}_{1}},{{e}_{2}},\ldots ,{{e}_{n}} \right]={{e}_{n+3}}& \\ 
 \left[ {{e}_{2}},\ldots ,{{e}_{n+1}} \right]={{e}_{n+4}}& \\ 
 \left[ {{e}_{1}},\ldots ,{{{\hat{e}}}_{i}},\ldots ,{{e}_{n}},{{e}_{n+1}} \right]={{\alpha }_{i}}{{e}_{n+5}},&\text{ }2\le i\le n \\ 
\end{array} \right..\]
Due to the derived dimension, we conclude that at least one of the ${{\alpha }_{i}}\text{ }\!\!'\!\!\text{ s }\left( 2\le i\le n \right)$ is non-zero. 
Without loss of generality, we assume that ${{\alpha }_{2}}\ne 0$. Taking
\[e'_2=e_2+\sum_{i=3}^n(-1)^{i}\frac{\alpha_i}{\alpha_2}e_i,\quad e'_j=e_j,\quad j=1,3,\ldots,n+4,\quad e'_{n+5}=\alpha_2e_{n+5}\]
Hence, the non-zero brackets of this algebra are as follows:
\[\left[ {{e}_{1}},\ldots ,{{e}_{n}} \right]={{e}_{n+3}},\text{ }\left[ {{e}_{2}},\ldots ,{{e}_{n+1}} \right]={{e}_{n+4}},\text{ }\left[ {{e}_{1}},{{e}_{3}},\ldots ,{{e}_{n+1}} \right]={{e}_{n+5}}.\] 
We denote this algebra by ${{A}_{n,n+5,4}}$.

Now, assume that $\dim Z\left( A \right)=3$, thus ${{A}^{2}}=Z\left( A \right)=\left\langle {{e}_{n+3}},{{e}_{n+4}},{{e}_{n+5}} \right\rangle $. Hence  
\[\left\{ \begin{array}{ll}
 \left[ {{e}_{1}},{{e}_{2}},\ldots ,{{e}_{n}} \right]={{e}_{n+3}}& \\ 
 \left[ {{e}_{2}},\ldots ,{{e}_{n+1}} \right]={{e}_{n+4}}& \\ 
 \left[ {{e}_{1}},\ldots ,{{{\hat{e}}}_{i}},\ldots ,{{e}_{n}},{{e}_{n+1}} \right]={{\alpha }_{i}}{{e}_{n+5}},&\text{ }2\le i\le n \\ 
 \left[ {{e}_{1}},\ldots ,{{{\hat{e}}}_{i}},\ldots ,{{e}_{n}},{{e}_{n+2}} \right]={{\beta }_{i}}{{e}_{n+5}},&\text{ }1\le i\le n \\ 
 \left[ {{e}_{1}},\ldots ,{{{\hat{e}}}_{i}},\ldots ,{{{\hat{e}}}_{j}},\ldots ,{{e}_{n}},{{e}_{n+1}},{{e}_{n+2}} \right]={{\theta }_{ij}}{{e}_{n+5}},&\text{ }1\le i<j\le n \\ 
\end{array} \right..\]
‌Since $\dim{{\left( {A}/{\left\langle {{e}_{n+3}},{{e}_{n+4}} \right\rangle }\; \right)}^{2}}=1$, we have ${A}/{\left\langle {{e}_{n+3}},{{e}_{n+4}} \right\rangle }\;\cong H\left( n,1 \right)\oplus F\left( 2 \right)$. According to the structure of $n$-Lie algebras, we conclude that only one of the coefficients ${{\theta }_{ij}}\left( 1\le i<j\le n \right)$, ${{\beta }_{i}}\left( 1\le i\le n \right)$, ${{\alpha }_{i}}\left( 2\le i\le n \right)$ is equal to one and the other are zero.
On account of $Z\left( A \right)=\left\langle {{e}_{n+3}},{{e}_{n+4}},{{e}_{n+5}} \right\rangle $,  the coefficients    ${{\alpha }_{i}}\left( 2\le i\le n \right)$   
can not be equal to one. We have two cases.  

(i) Only one of the coefficients ${{\beta }_{i}}\left( 1\le i\le n \right)$ is equal to one and the others are zero.
Without loss of generality, we assume that ${{\beta }_{1}}=1$ and the others are zero. So the non-zero brackets of algebra are as follows:
\[\left[ {{e}_{1}},\ldots ,{{e}_{n}} \right]={{e}_{n+3}},\text{ }\left[ {{e}_{2}},\ldots ,{{e}_{n+1}} \right]={{e}_{n+4}},\text{ }\left[ {{e}_{2}},\ldots ,{{e}_{n}},{{e}_{n+2}} \right]={{e}_{n+5}}\]
We denote this algebra by ${{A}_{n,n+5,5}}$.

(ii) Only one of the coefficients ${{\theta }_{ij}}\left( 1\le i<j\le n \right)$ is equal one and the others are zero.

Without loss of generality, we assume that ${{\theta }_{12}}=1$, and the others are zero. So, the non-zero brackets of algebra are as follows:
\[\left[ {{e}_{1}},\ldots ,{{e}_{n}} \right]={{e}_{n+3}},\text{ }\left[ {{e}_{2}},\ldots ,{{e}_{n+1}} \right]={{e}_{n+4}},\text{ }\left[ {{e}_{3}},\ldots ,{{e}_{n+2}} \right]={{e}_{n+5}}.\]
We denote this algebra by$A_{n,n+5,6}$.

\medskip

Case 3: $A/\gen{e_{n+5}}\cong\gen{\ol{e}_1,\ldots,\ol{e}_{n+4}:[\ol{e}_1,\ldots,\ol{e}_n]=\ol{e}_{n+3},[\ol{e}_3,\ldots,\ol{e}_{n+2}]=\ol{e}_{n+4}}$. 

In this case $\dim{{A}^{2}}=3$, and so ${{A}^{2}}=Z\left( A \right)=\left\langle {{e}_{n+3}},{{e}_{n+4}},{{e}_{n+5}} \right\rangle $.  The brackets of $A$ are as follows:
\[\left\{ \begin{array}{ll}
 \left[ {{e}_{1}},{{e}_{2}},\ldots ,{{e}_{n}} \right]={{e}_{n+3}}+\alpha {{e}_{n+5}} &\\ 
 \left[ {{e}_{3}},\ldots ,{{e}_{n+2}} \right]={{e}_{n+4}}+\beta {{e}_{n+5}}& \\ 
 \left[ {{e}_{1}},\ldots ,{{{\hat{e}}}_{i}},\ldots ,{{e}_{n}},{{e}_{n+1}} \right]={{\alpha }_{i}}{{e}_{n+5}},&\text{ }1\le i\le n \\ 
 \left[ {{e}_{1}},\ldots ,{{{\hat{e}}}_{i}},\ldots ,{{e}_{n}},{{e}_{n+2}} \right]={{\beta }_{i}}{{e}_{n+5}},&\text{ }1\le i\le n \\ 
 \left[ {{e}_{1}},\ldots ,{{{\hat{e}}}_{i}},\ldots ,{{{\hat{e}}}_{j}},\ldots ,{{e}_{n}},{{e}_{n+1}},{{e}_{n+2}} \right]={{\theta }_{ij}}{{e}_{n+5}},&\text{ }1\le i<j\le n\text{ and }\left( i,j \right)\ne \left( 1,2 \right) \\ 
\end{array} \right.\]
By changing the base, we can take  $\alpha =\beta =0$.  So
\[\left\{ \begin{array}{ll}
 \left[ {{e}_{1}},{{e}_{2}},\ldots ,{{e}_{n}} \right]={{e}_{n+3}}& \\ 
 \left[ {{e}_{3}},\ldots ,{{e}_{n+2}} \right]={{e}_{n+4}}& \\ 
 \left[ {{e}_{1}},\ldots ,{{{\hat{e}}}_{i}},\ldots ,{{e}_{n}},{{e}_{n+1}} \right]={{\alpha }_{i}}{{e}_{n+5}},&\text{ }1\le i\le n \\ 
 \left[ {{e}_{1}},\ldots ,{{{\hat{e}}}_{i}},\ldots ,{{e}_{n}},{{e}_{n+2}} \right]={{\beta }_{i}}{{e}_{n+5}},&\text{ }1\le i\le n \\ 
 \left[ {{e}_{1}},\ldots ,{{{\hat{e}}}_{i}},\ldots ,{{{\hat{e}}}_{j}},\ldots ,{{e}_{n}},{{e}_{n+1}},{{e}_{n+2}} \right]={{\theta }_{ij}}{{e}_{n+5}},&\text{ }1\le i<j\le n\text{ and }\left( i,j \right)\ne \left( 1,2 \right) \\ 
\end{array} \right.\]
Since  $\dim{{\left( {A}/{\left\langle {{e}_{n+3}},{{e}_{n+4}} \right\rangle }\; \right)}^{2}}=1$, we get ${A}/{\left\langle {{e}_{n+3}},{{e}_{n+4}} \right\rangle }\;\cong H\left( n,1 \right)\oplus F\left( 2 \right)$. According to the structure of $n$-Lie algebras, we conclude that at least one of the coefficients ${{\theta }_{ij}}\left( 1\le i<j\le n,\left( i,j \right)\ne \left( 1,2 \right) \right)$, ${{\beta }_{i}}\left( 1\le i\le n \right)$, ${{\alpha }_{i}}\left( 1\le i\le n \right)$ is equal one and the others are zero.
Since $Z\left( A \right)=\left\langle {{e}_{n+3}},{{e}_{n+4}},{{e}_{n+5}} \right\rangle $, we have three cases. 

(i) Only one of the coefficients ${{\alpha }_{i}}\left( 1\le i\le n \right)$ is equal to one and the others are zero.  Without loss of generality, we assume that ${{\alpha }_{1}}=1$, and the others are zero. So, we have 
\[\left[ {{e}_{1}},\ldots ,{{e}_{n}} \right]={{e}_{n+3}},\text{ }\left[ {{e}_{3}},\ldots ,{{e}_{n+2}} \right]={{e}_{n+4}},\text{ }\left[ {{e}_{2}},\ldots ,{{e}_{n}},{{e}_{n+1}} \right]={{e}_{n+5}}.\]
One can check that, this algebra is isomorphic to  ${{A}_{n,n+5,6}}$.
 
(ii) Only one of the coefficients ${{\beta }_{i}}\left( 1\le i\le n \right)$ is equal to one and the others are zero.  Without loss of generality, we assume that ${{\beta }_{1}}=1$, and the others are zero. Hence, the non-zero bracket of algebra are 
\[\left[ {{e}_{1}},\ldots ,{{e}_{n}} \right]={{e}_{n+3}},\text{ }\left[ {{e}_{3}},\ldots ,{{e}_{n+2}} \right]={{e}_{n+4}},\text{ }\left[ {{e}_{2}},\ldots ,{{e}_{n}},{{e}_{n+2}} \right]={{e}_{n+5}}.\]
One can easily see that, this algebra is isomorphic to  ${{A}_{n,n+5,6}}$. 

(iii) Only one of the coefficients of ${{\theta }_{ij}}\left( 1\le i<j\le n,\left( i,j \right)\ne \left( 1,2 \right) \right)$ is equal to one and the others are zero.  Without loss of generality, we assume that ${{\theta }_{13}}=1$, and the others are zero. Hence, the non-zero bracket of algebra are 

\[\left[ {{e}_{1}},\ldots ,{{e}_{n}} \right]={{e}_{n+3}},\text{ }\left[ {{e}_{2}},\ldots ,{{e}_{n+1}} \right]={{e}_{n+4}},\text{ }\left[ {{e}_{2}},{{e}_{4}},\ldots ,{{e}_{n+2}} \right]={{e}_{n+5}}.\]
One can easily see that, this algebra is isomorphic to  ${{A}_{n,n+5,5}}$. 

\medskip

Case 4: $A/\gen{e_{n+5}}\cong\gen{\ol{e}_1,\ldots,\ol{e}_{n+4}:[\ol{e}_1,\ldots,\ol{e}_n]=\ol{e}_{n+1},[\ol{e}_2,\ldots,\ol{e}_n,\ol{e}_{n+2}]=\ol{e}_{n+3}}$. 

In this case $\dim{{A}^{2}}=4$, thus ${{A}^{2}}=Z\left( A \right)=\left\langle {{e}_{n+1}},{{e}_{n+3}},{{e}_{n+4}},{{e}_{n+5}} \right\rangle $.   The brackets of this algebra are as follows:
\[\left\{ \begin{array}{ll}
 \left[ {{e}_{1}},{{e}_{2}},\ldots ,{{e}_{n}} \right]={{e}_{n+1}}+\alpha {{e}_{n+5}}& \\ 
 \left[ {{e}_{2}},\ldots ,{{e}_{n}},{{e}_{n+2}} \right]={{e}_{n+3}}+\beta {{e}_{n+5}}& \\ 
 \left[ {{e}_{1}},{{e}_{3}},\ldots ,{{e}_{n}},{{e}_{n+2}} \right]={{e}_{n+4}}+\gamma {{e}_{n+5}}& \\ 
 \left[ {{e}_{1}},\ldots ,{{{\hat{e}}}_{i}},\ldots ,{{e}_{n}},{{e}_{n+2}} \right]={{\beta }_{i}}{{e}_{n+5}},&\text{ }3\le i\le n \\ 
\end{array} \right.\]
We use the change of base. In fact, by letting $\alpha =\beta =\gamma =0$, we get 
\[\left\{ \begin{array}{ll}
 \left[ {{e}_{1}},{{e}_{2}},\ldots ,{{e}_{n}} \right]={{e}_{n+1}}& \\ 
 \left[ {{e}_{2}},\ldots ,{{e}_{n}},{{e}_{n+2}} \right]={{e}_{n+3}}& \\ 
 \left[ {{e}_{1}},{{e}_{3}},\ldots ,{{e}_{n}},{{e}_{n+2}} \right]={{e}_{n+4}}& \\ 
 \left[ {{e}_{1}},\ldots ,{{{\hat{e}}}_{i}},\ldots ,{{e}_{n}},{{e}_{n+2}} \right]={{\beta }_{i}}{{e}_{n+5}},&\text{ }3\le i\le n \\ 
\end{array} \right..\]
Due to the derived dimension, we conclude that at least one of the ${{\beta }_{i}}\left( 3\le i\le n \right)$ is non-zero. 
Without loss of generality, we assume that ${{\beta }_{3}}\ne 0$.
Applying the following transformations
\[e'_3=e_3+\sum_{i=4}^n(-1)^{i-1}\frac{\be_i}{\be_3}e_i,\quad e'_j=e_j,\quad j=1,2,4,\ldots,n+4,\quad e'_{n+5}=\be_3e_{n+5}\]
the non-zero brackets of algebra are as follows
\[\begin{array}{ll}
 \left[ {{e}_{1}},{{e}_{2}},\ldots ,{{e}_{n}} \right]={{e}_{n+1}},&\text{ }\left[ {{e}_{2}},\ldots ,{{e}_{n}},{{e}_{n+2}} \right]={{e}_{n+3}} \\ 
 \left[ {{e}_{1}},{{e}_{3}},\ldots ,{{e}_{n}},{{e}_{n+2}} \right]={{e}_{n+4}},&\text{ }\left[ {{e}_{1}},{{e}_{2}},{{e}_{4}},\ldots ,{{e}_{n}},{{e}_{n+2}} \right]={{e}_{n+5}} .\\ 
\end{array}\]		
We denote this algebra by ${{A}_{n,n+5,7}}$.  

\medskip

Case 5: $A/\gen{e_{n+5}}\cong H(3,2) \cong\gen{\ol{e}_1,\ldots,\ol{e}_7:[\ol{e}_1,\ol{e}_2,\ol{e}_3]=[\ol{e}_4,\ol{e}_5,\ol{e}_6]=\ol{e}_7}$. 

In this case, according to the structure of algebra, we get ${{A}^{2}}=Z\left( A \right)=\left\langle {{e}_{7}},{{e}_{8}} \right\rangle $.
According to Lemma \ref{3-Lie algebra of dimension 8}, we have $A=\left\langle {{e}_{1}},\ldots ,{{e}_{8}}:[e_1,e_2,e_3]=e_7,[e_4,e_5,e_6]=e_8 \right\rangle $.  Therefore, this algebra does not satisfy our conditions.

\begin{theorem}
The $(n+5)$-dimensional nilpotent $n$-Lie algebras of class two, for $n>2$, over the arbitrary field are:
$$H(n,1)\oplus F(4), H(3,2)\oplus F(1),H(4,2),A_{3,8,7},A_{n,n+5,i}, \quad 1\leq i \leq 7.$$
\end{theorem}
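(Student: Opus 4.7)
The plan is to reduce to the already-classified $(n+4)$-dimensional case by quotienting out a suitable one-dimensional central subspace. First I would dispense with the trivial case $\dim A^{2}=1$: Theorem \ref{dimA^2=1} forces $A\cong H(n,m)\oplus F(n+5-mn-1)$, and since $\dim A=n+5$ the admissible values are $(n,m)\in\{(n,1),(3,2),(4,2)\}$, yielding exactly $H(n,1)\oplus F(4)$, $H(3,2)\oplus F(1)$ and $H(4,2)$.

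Next assume $\dim A^{2}\ge 2$ and pick a basis in which $\langle e_{n+4},e_{n+5}\rangle\subseteq A^{2}$. The quotient $B:=A/\langle e_{n+5}\rangle$ is then an $(n+4)$-dimensional nilpotent $n$-Lie algebra of class two, and by Theorem \ref{d=n+4} it is isomorphic to one of $H(n,1)\oplus F(3)$, $A_{n,n+4,1}$, $A_{n,n+4,2}$, $A_{n,n+4,3}$, or $H(3,2)$. This gives the five cases organised in the text. In each case I would lift a presentation of $B$ to a presentation of $A$ with undetermined coefficients in $F$ for the ``extra'' brackets whose values land in $\langle e_{n+5}\rangle$, then normalise by two kinds of moves: using a shift in the existing generators to set the deformation parameters $\alpha,\beta,\gamma$ of the lifted relations to zero; and using the constraint $A^{2}\subseteq Z(A)$ together with the prescribed dimension of $Z(A)$ to kill most of the remaining parameters. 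A recurring subroutine is: after quotienting further by the old $A^{2}$, the remaining algebra has $\dim(\cdot)^{2}=1$, so Theorem \ref{dimA^2=1} pins down exactly one non-zero coefficient up to scaling, and the central-dimension constraint rules out the incompatible choices.

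Within this framework, Case 1 splits by $\dim Z(A)\in\{2,3,4\}$ producing $A_{n,n+5,1}, A_{n,n+5,2}, A_{n,n+5,3}$ (and, in the delicate low-rank exception $n=3$, the algebra $A_{3,8,7}$); Case 2 splits by $\dim Z(A)\in\{3,4\}$ producing $A_{n,n+5,4}, A_{n,n+5,5}, A_{n,n+5,6}$; Case 3 produces no new algebras (each subcase is shown isomorphic via an explicit change of basis to $A_{n,n+5,5}$ or $A_{n,n+5,6}$); Case 4 yields the new algebra $A_{n,n+5,7}$; and Case 5 collapses because $H(3,2)$ only makes sense for $n=3$, and then Lemma \ref{3-Lie algebra of dimension 8} shows the resulting algebra is actually $A_{3,8,7}$, already obtained.

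The main obstacle is the $n=3$ exception in Case 1: when $\dim Z(A)=2$ and $\dim A^{2}=2$ with $n=3$, Theorem \ref{A^2=Z(A)} does not apply (its hypothesis requires $k\ge 3$), so the na\"\i ve reduction fails and one needs the ad hoc analysis of Lemma \ref{3-Lie algebra of dimension 8}, which I would therefore invoke explicitly. A secondary subtlety is verifying that the listed algebras are pairwise non-isomorphic and that Cases 3 and 5 really do reduce to previously constructed algebras; these reductions are carried out by exhibiting explicit base changes analogous to those in Cases 1, 2 and 4, and they become routine once the parameter normalisation above is in place.
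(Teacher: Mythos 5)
Your proposal follows essentially the same route as the paper: dispose of $\dim A^{2}=1$ via Theorem \ref{dimA^2=1}, quotient by a central line to reduce to the $(n+4)$-dimensional classification of Theorem \ref{d=n+4}, split each of the five resulting cases by $\dim Z(A)$, normalise the lifted coefficients, and handle the $n=3$ exception in Case 1 (where Theorem \ref{A^2=Z(A)} requires $k\le n+1$) by Lemma \ref{3-Lie algebra of dimension 8}. The case analysis, the resulting algebras, and the collapsing of Cases 3 and 5 all agree with the paper's argument.
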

According to the above theorem and Theorem \ref{7 dimensional nilpotent}, the main theorem of this paper is as follows:
\begin{corollary}
The $(n+5)$-dimensional nilpotent $n$-Lie algebras of class two are as following
\[\begin{aligned}
& H(n,1)\oplus F(4), H(2,2)\oplus F(2), H(2,3),H(3,2)\oplus F(1), H(4,2), A_{3,8,7},\\ 
& A_{n,n+5,i}, \quad 1\leq i \leq 6,\quad L_{7,i}, \quad 1 \leq i \leq 10.  \\ 
\end{aligned}\]

These algebras are valid for the case $n=2$ on the integers field and algebraically closed field, and $n>2$ on the arbitrary field.
\end{corollary}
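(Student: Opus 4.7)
The plan is to split the classification according to $\dim A^2$, which must lie in $\{1,2,3,4\}$ since $A^2 \subseteq Z(A)$ and $\dim A = n+5 \geq 8$. When $\dim A^2 = 1$, Theorem \ref{dimA^2=1} immediately forces $A$ to be isomorphic to $H(n,1)\oplus F(4)$, $H(3,2)\oplus F(1)$, or $H(4,2)$, since these are the only special-Heisenberg factorizations of dimension $n+5$ (using $mn+1 = n+5$ with $m\in\{1,2\}$).

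For $\dim A^2 \geq 2$, I would pick a nonzero $e_{n+5}\in A^2$, pass to the $(n+4)$-dimensional quotient $\bar A := A/\langle e_{n+5}\rangle$, and note that $\bar A$ is again nilpotent of class at most two. Applying Theorem \ref{d=n+4} leaves five class-two possibilities for $\bar A$: $H(n,1)\oplus F(3)$, $A_{n,n+4,1}$, $A_{n,n+4,2}$, $A_{n,n+4,3}$, and (when $n=3$) $H(3,2)$. Each case will be analyzed by lifting a basis $\bar e_1,\ldots,\bar e_{n+4}$ of $\bar A$ to elements $e_1,\ldots,e_{n+4}\in A$; the brackets of $A$ then agree with those of $\bar A$ modulo $e_{n+5}$, and the remaining freedom is recorded in scalar parameters $\alpha,\beta,\gamma,\alpha_i,\beta_i,\theta_{ij},\ldots$ multiplying $e_{n+5}$. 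I will first absorb the scalars attached to brackets already present in $\bar A$ by translating the relevant lifts inside $A^2$.

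Next I would split on $\dim Z(A)\in\{2,3,4\}$. Centrality constraints immediately kill most of the surviving structure constants, and for those that remain I would pass to a further quotient by one or two basis vectors of $A^2$ so that Theorem \ref{dimA^2=1} applies: the resulting algebra must be a special Heisenberg with exactly one nonzero defining bracket, which pinpoints precisely which of the $\alpha_i,\beta_i,\theta_{ij}$ may be normalized to $1$. A final linear change of basis on the non-central generators (analogous to the substitutions $e_1' = e_1 + \sum_i c_i e_i$ used throughout the excerpt) removes the remaining scalars and yields the normal forms $A_{n,n+5,1},\ldots,A_{n,n+5,7}$ together with the exceptional $A_{3,8,7}$.

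The main obstacle will be ruling out redundancy between different lifts: lifts coming from different quotient choices, or different subcases within one quotient, may well produce isomorphic algebras that should appear only once in the final list. For example, in the case $\bar A \cong A_{n,n+4,2}$ the three possibilities (only some $\alpha_i$, or only some $\beta_i$, or only some $\theta_{ij}$ equal to $1$) must all be shown to collapse, via explicit permutations and basis transformations, to the already-found $A_{n,n+5,5}$ or $A_{n,n+5,6}$. Two further subtleties will be: the low-dimensional anomaly at $n=3$, where $\dim Z(A)=\dim A^2=2$ in the $\bar A\cong H(n,1)\oplus F(3)$ branch falls outside the hypothesis $3\le k\le n+1$ of Theorem \ref{A^2=Z(A)} and must instead be handled by the separate Lemma \ref{3-Lie algebra of dimension 8} to produce $A_{3,8,7}$; and verifying that the $\bar A\cong H(3,2)$ case recovers the same $A_{3,8,7}$ rather than a new algebra, so it has already been counted under $\dim A^2 = 2$.
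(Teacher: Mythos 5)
Your proposal follows essentially the same route as the paper's proof of its main theorem: split on $\dim A^2$, apply Theorem \ref{dimA^2=1} when $\dim A^2=1$, and otherwise quotient by a central line inside $A^2$ so that Theorem \ref{d=n+4} lists the possible $(n+4)$-dimensional class-two quotients, then recover $A$ case by case via $\dim Z(A)$, further quotients to which Theorem \ref{dimA^2=1} applies, and explicit changes of basis, with Lemma \ref{3-Lie algebra of dimension 8} covering the $n=3$ anomaly and the collapse of redundant subcases onto $A_{n,n+5,5}$ and $A_{n,n+5,6}$. The one point your outline does not address is the $n=2$ part of the corollary: the entries $H(2,2)\oplus F(2)$, $H(2,3)$ and $L_{7,i}$ ($1\le i\le 10$) cannot come out of your analysis, which assumes $n\ge 3$ (so $\dim A\ge 8$); in the paper they are simply imported from the known classification of $7$-dimensional nilpotent Lie algebras of class two (Theorem \ref{7 dimensional nilpotent}), which is also why the corollary restricts the field differently for $n=2$ than for $n>2$. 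Adding that citation closes the gap; everything else matches the paper's argument.
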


In the following table we list the algebras which are presented in this article.

\begin{center}
Table I
\begin{tabular}{|c|c|}
\hline
Nilpotent $n$-Lie algebra&Non-zero multiplications\\
	\hline
		$A_{n,n+4,1}$ & $[e_1,\ldots,e_n]={{e}_{n+3}},[e_2,\ldots,e_{n+1}]=e_{n+4}$ \\
		\hline
		${{A}_{n,n+4,2}}$ & $[e_1,\ldots,e_n]={{e}_{n+3}},\left[ {{e}_{3}},\ldots ,{{e}_{n+2}} \right]={{e}_{n+4}}\text{ }\left( n\ge 3 \right)$ \\ 
		\hline
		$A_{n,n+4,3}$&$\begin{array}{c}[e_1,\ldots,e_n]=e_{n+1},[e_2,\ldots,e_n,e_{n+2}]=e_{n+3},\\{[e_1,e_3,\ldots,e_n,e_{n+2}]}=e_{n+4}\end{array}$\\
	
		\hline
		${{A}_{n,n+5,1}}$ & $[e_1,\ldots,e_n]=e_{n+4},[e_2,\ldots,e_{n+1}]=e_{n+5}$ \\ 
		\hline
		${{A}_{n,n+5,2}}$ & $[e_1,\ldots,e_n]={{e}_{n+4}},[e_3,\ldots,e_{n+2}]=e_{n+5}\text{ }(n\ge 3)$ \\
		\hline
		${{A}_{n,n+5,3}}$ &
		$\left[ {{e}_{1}},\ldots ,{{e}_{n}} \right]={{e}_{n+5}},\left[ {{e}_{4}},\ldots ,{{e}_{n+3}} \right]={{e}_{n+4}}\text{ }\left( n\ge 4 \right)$ \\
		\hline
		$A_{n,n+5,4}$&$\begin{array}{c}[e_1,\ldots,e_n]=e_{n+3},[e_2,\ldots,e_{n+1}]=e_{n+4},\\{[e_1,e_3,\ldots,e_{n+1}]}=e_{n+5}\end{array}$\\
		\hline
		$A_{n,n+5,5}$&$\begin{array}{c}[e_1,\ldots,e_n]=e_{n+3},[e_2,\ldots,e_{n+1}]=e_{n+4},\\{[e_2,\ldots,e_n,e_{n+2}]}=e_{n+5}\end{array}$\\
		\hline
		$A_{n,n+5,6}$&$\begin{array}{c}[e_1,\ldots,e_n]=e_{n+3},[e_2,\ldots,e_{n+1}]=e_{n+4},\\{[e_3,\ldots,e_{n+2}]}=e_{n+5}\end{array}$\\
		\hline
$A_{n,n+5,7}$&$\begin{array}{c}[e_1,\ldots,e_n]=e_{n+1},[e_1,e_2,e_4,\ldots,e_n,e_{n+2}]=e_{n+5},\\{[e_1,e_3,\ldots,e_n,e_{n+2}]}=e_{n+4},[e_2,\ldots,e_n,e_{n+2}]=e_{n+3} \end{array}$\\
\hline
		${{A}_{3,8,7}}$ & $[e_1,e_2,e_3]=e_7,[e_4,e_5,e_6]=e_8$ \\ 
		\hline
		${{L}_{7,1}}$ & $[e_1,e_2]=e_4,[e_1,e_3]=e_5$ \\ 
		\hline
		${{L}_{7,2}}$ & $[e_1,e_2]=e_4, [e_1,e_3]=e_5, [e_2,e_3]=e_6$ \\ 
		\hline
		${{L}_{7,3}}$ & $[e_1,e_2]=[e_3,e_4]=e_5, [e_3,e_4]=e_6$ \\ 
		\hline
		${{L}_{7,4}}$ & $[e_1,e_2]=e_5,\left[ {{e}_{3}},{{e}_{4}} \right]={{e}_{6}}$ \\ 
		\hline
        ${{L}_{7,5}}$ & $[e_1,e_2]=e_5, [e_2,e_3]=e_6, [e_2,e_4]=e_7$ \\
		\hline
		${{L}_{7,6}}$ & $[e_1,e_2]=e_5, [e_2,e_3]=e_6, [e_3,e_4]=e_7$ \\ 
		\hline
		$L_{7,7}$ & $[e_1,e_2]=[e_3,e_4]=e_5,[e_2,e_3]=e_6,[e_2,e_4]=e_7$ \\
		\hline
		$L_{7,8}$ & $[e_1,e_2]=[e_3,e_4]=e_5,[e_1,e_3]=e_6,[e_2,e_4]=e_7$ \\
		\hline
		${{L}_{7,9}}$ & $[e_1,e_5]=[e_3,e_4]=e_6, [e_2,e_5]=e_7$ \\
		\hline
		$L_{7,10}$ & $[e_1,e_2]=[e_3,e_4]=e_6,[e_1,e_5]=[e_2,e_3]=e_7$ \\
		\hline
\end{tabular}
\end{center}


\end{document}